\title{Left and right convergence of graphs with bounded degree}
\author{Christian Borgs, Jennifer Chayes,
Jeff Kahn\footnote{Research supported by NSF grant DMS0701175},
L\'aszl\'o Lov\'asz\footnote{Research supported by OTKA grant
No.~67867}}
\date{(January 31, 2010)}
\newtheorem{theorem}{Theorem}[section]
\newtheorem{prop}[theorem]{Proposition}
\newtheorem{lemma}[theorem]{Lemma}
\newtheorem{corollary}[theorem]{Corollary}
\newtheorem{remark}{Remark}
\newenvironment{proof}{\medskip\noindent{\bf Proof. }}{\hfill$\square$\medskip}
\newenvironment{proof*}[1]{\medskip\noindent{\bf Proof of #1.}}{\hfill$\square$\medskip}
\begin{document}

\addtolength{\baselineskip}{3pt} \setlength{\oddsidemargin}{0.2in}

\def\col{{\rm col}}
\def\hom{{\rm hom}}
\def\Hom{{\rm Hom}}
\def\Inj{{\rm Inj}}
\def\Ind{{\rm Ind}}
\def\inj{{\rm inj}}
\def\ind{{\rm ind}}
\def\aut{{\rm aut}}
\def\Surj{{\rm Surj}}
\def\surj{{\rm surj}}
\def\Aut{{\rm Aut}}
\def\PAG{{\rm PAG}}
\def\Match{{\rm Match}}
\def\ConIn{{\rm CInd}}
\def\CSpan{{\rm CSpan}}
\def\SpT{{\rm SpTr}}
\def\Conn{{\rm Con}}
\def\Sub{{\rm Sub}}

\def\chr{{\sf chr}}
\def\stab{{\sf stab}}
\def\bet{{\sf bet}}
\def\tree{{\sf tree}}
\def\csp{{\sf cr}}

\long\def\killtext#1{}

\long\def\privremark#1{\medskip[{\it {#1}}]\medskip}

\def\proofend{\hfill$\square$\medskip}
\def\Proof{\noindent{\bf Proof. }}

\def\ra{\rightarrow}
\def\mn{\medskip\noindent}
\def\gc{\gamma}
\def\gs{\sigma}
\def\gb{\beta}
\def\ga{\alpha}
\def\gz{\zeta}
\def\gl{\lambda}
\def\gL{\Lambda}
\def\sm{\setminus}
\def\sub{\subseteq}
\def\ent{{\sf H}}
\def\R{{\mathbb R}}
\def\one{{\mathbf 1}}
\def\Q{{\mathbb Q}}
\def\Z{{\mathbb Z}}
\def\C{{\mathbb C}}

\def\AA{{\cal A}}\def\BB{{\cal B}}\def\CC{{\cal C}}\def\DD{{\cal D}}\def\EE{{\cal E}}\def\FF{{\cal F}}
\def\GG{{\cal G}}\def\HH{{\cal H}}\def\II{{\cal I}}\def\JJ{{\cal J}}\def\KK{{\cal K}}\def\LL{{\cal L}}
\def\MM{{\cal M}}\def\NN{{\cal N}}\def\OO{{\cal O}}\def\PP{{\cal P}}\def\QQ{{\cal Q}}\def\RR{{\cal R}}
\def\SS{{\cal S}}\def\TT{{\cal T}}\def\UU{{\cal U}}\def\VV{{\cal V}}\def\WW{{\cal W}}\def\XX{{\cal X}}
\def\YY{{\cal Y}}\def\ZZ{{\cal Z}}
\def\P{{\sf P}}
\def\E{{\sf E}}
\def\Var{{\sf Var}}
\def\T{^{\sf T}}

\def\tr{{\rm tr}}
\def\cost{\hbox{\rm cost}}
\def\val{\hbox{\rm val}}
\def\rk{{\rm rk}}
\def\diam{{\rm diam}}
\def\Ker{{\rm Ker}}
\def\eps{\varepsilon}
\def\supp{{\rm supp}}

\maketitle

\tableofcontents

\begin{abstract}
The theory of convergent graph sequences has been worked out in two
extreme cases, dense graphs and bounded degree graphs. One can define
convergence in terms of counting homomorphisms from fixed graphs into
members of the sequence (left-convergence), or counting homomorphisms
into fixed graphs (right-convergence). Under appropriate conditions,
these two ways of defining convergence was proved to be equivalent in
the dense case by Borgs, Chayes, Lov\'asz, S\'os and Vesztergombi. In
this paper a similar equivalence is established in the bounded degree
case.

In terms of statistical physics, the implication that left
convergence implies right convergence means that for a
left-convergent sequence, partition functions of a large class of
statistical physics models converge. The proof relies on techniques
from statistical physics, like cluster expansion and Dobrushin
Uniqueness.
\end{abstract}

\section{Introduction}

The theory of convergent graph sequences has been worked out in two
extreme cases, dense graphs and bounded degree graphs. The case of
dense graphs is probably easier; convergence of such graphs was
introduced and characterized in different ways in
\cite{BCLSV1,BCLSV2}. Convergence of bounded degree graph sequences
was defined by Benjamini and Schramm \cite{BSch}, and has inspired a
lot of work \cite{BSchSh,Ly,Elek1,Schramm1}. While this is perhaps
the more important case from the point of view of applications, the
theory is less complete, and, for example, some of the
characterizations of convergence in the dense case have no analogues.
The goal of this paper is to prove, for bounded degree graphs, an
analogue of one of these characterizations.

For a simple finite graph $G$ and node $v\in V(G)$, let $B_G(v,r)$
denote the subgraph of $G$ induced by the nodes at distance at most
$r$ from $v$. We consider this as a rooted graph, with $v$ designated
as its root. { Following \cite{BSch},} a sequence $(G_1,\dots,G_n)$
of simple graphs with degrees uniformly bounded by $D$ is called {\it
locally convergent} (weakly convergent, left-convergent) if for every
fixed integer $r\ge 1$, selecting a random node { $v$} uniformly from
$V(G)$, the probability that $B_{G_n}(v,r)$ is a fixed rooted graph
$U$ tends to a limit as $n\to\infty$. It is not hard to see that this
definition is equivalent to saying that for every connected graph
$F$, $\hom(F,G_n)/|G_n|$ tends to a limit, where $\hom(F,G_n)$ is the
number of homomorphisms (adjacency preserving maps) of $F$ into
$G_n$, and $|G_n|=|V(G_n)|$ is the number of nodes of $G_n$.

This reformulation raises the possibility of defining convergence by
turning the arrows around, i.e., in terms of $\hom(G_n,H)$ for fixed
graphs $H$. It is easy to see that the ``right'' normalization in
this case is $\ln\hom(G_n,H)/|G_n|$.

In general, convergence of $\ln\hom(G_n,H)/|G_n|$ does not follow
from left-convergence. As an example, if $G_n$ is the $n$-cycle, then
the sequence $(G_n)$ is trivially locally convergent. But if $K_2$ is
the complete graph with 2 nodes, then $\ln\hom(G_n,{ K_2})/|G_n|$
alternates between $(\ln 2)/n$ and $-\infty$ depending on the parity
of $n$.

We will prove (Theorem \ref{THM:HOM-CONV} that if $H$ is sufficiently
dense (depending on $D$), then local convergence does imply that
$\ln\hom(G_n,H)/|G_n|$ is convergent. Conversely
(\ref{THM:RIGHT2LEFT}), if $\ln\hom(G_n,H)/|G_n|$ is convergent for
every sufficiently dense $H$, then $(G_n)$ is locally convergent.

\section{Preliminaries}

A graph is {\it simple} if it has no loops or parallel edges. A {\it
simple looped graph} is a simple graph with a loop added at each of
its node. A {\it weighted graph} $H$ is a graph with a weight
$\alpha_i(H)>0$ associated with each node $i$ and a real weight
$\beta_{ij}(H)$ associated with each edge $ij$. (The weights
$\beta_{ij}(H)$ may be negative.) We will consider each weighted
graph as a complete graph with loops at all nodes; absence of an edge
is indicated by weight $0$. (No parallel edges are allowed in a
weighted graph.) If the graph $H$ is understood from the context, we
use the notation $\alpha_i$ and $\beta_{ij}$. We can consider every
simple graph (looped or not looped) as a weighted graph on the same
node set, where the original edges have weight $1$ and the missing
edges have weight $0$. Let $\alpha_H=\sum_{i\in V(H)}\alpha_i(H)$
denote the total nodeweight of $H$.

We denote by $\overline{H}$ the weighted graph obtained from $H$ by
replacing each edgeweight $\beta_{ij}$ by $1-\beta_{ij}$. (Note that
this also applies to loops. So if $H$ is the weighted graph
corresponding to a simple graph, then $\overline{H}$ corresponds to
the complement of $H$ with a loop added at each node.)

Let $\Sub(G)$ denote the set of nonempty subgraphs of $G$ without
isolated nodes, $\Conn(G)$, the set of connected subgraphs of $G$
with at least two nodes, $\ConIn(G)$, the set of connected induced
subgraphs of $G$ with at least two nodes, and $\CSpan(G)$, the set of
connected spanning subgraphs of $G$ { (with one or more nodes).}

To simplify notation, we will write $|G|=|V(G)|$.

Let $\{A_1,\dots,A_n\}$ be a family of sets. We denote by
$L(A_1,\dots,A_n)$ their intersection graph, i.e., the graph on
$V=[n]$ in which we connect $i$ and $j$ by an edge if $A_i\cap
A_j\not=\emptyset$.

For a set $V$, we denote by $\Pi(V)$ the set of its partitions.

\subsection{Homomorphism numbers and densities}

For two (finite) simple graphs $F$ and $G$, $\hom(F,G)$ denotes the
number of homomorphisms (adjacency preserving maps) from $F$ to $G$.
We define $t(F,G)$ to be the probability that a random map of $V(F)$
into $V(G)$ is a homomorphism, i.e.,
\[
t(F,G)=\frac{\hom(F,G)}{|G|^{|F|}}.
\]
We call $t(F,G)$ the {\it homomorphism density} of $F$ in $G$.

Sometimes we need to consider the number of injective homomorphisms
$\inj(F,G)$, the number of embeddings as induced subgraphs,
$\ind(F,G)$, and the number of homomorphisms surjective on both the
nodes and the edges, $\surj(F,G)$. We denote by
$\aut(F)=\ind(F,F)=\surj(F,F)$ the number of automorphisms of $F$.
The quotients $\inj_0(F,G)= \inj(F,G)/\inj(F,F)$ and $\ind_0(F,G)=
\ind(F,G)/\ind(F,F)$ count the numbers of subgraphs and induced
subgraphs of $G$ isomorphic to $F$, respectively.

We extend these notions to the case when the target graph (denoted by
$H$ in this case) is weighted:
\[
\hom(F,H)=\sum_{\phi:~V(F)\to V(H)} \prod_{u\in V(F)}
\alpha_{\phi(u)}(H) \prod_{uv\in E(F)} \beta_{\phi(u),\phi(v)}(H)
\]
where the sum runs over all maps from $V(F)$ to $V(HG)$. The
homomorphism density is now defined as
\[
t(F,H)=\frac{\hom(F,H)}{{\alpha_H}^{|F|}}.
\]
We will also need the following related quantity:
\begin{equation}\label{zGH}
z(G,H)= \sum_{F\in\CSpan(G)} (-1)^{|E(F)|} t(F,H).
\end{equation}
This quantity plays an important role in the sequel, and it would be
interesting to explore its combinatorial significance. We have the
inverse relation
\begin{align}\label{t2z}
\sum_{F\in\CSpan(G)} &(-1)^{|E(F)|} z(F,H) = \sum_{F\in\CSpan(G)}
(-1)^{|E(F)|} \sum_{J\in\CSpan(F)} (-1)^{|E(J)|} t(J,H)\nonumber\\
&= \sum_{J\in\CSpan(G)} t(J,H) \sum_{J\subseteq F\subseteq G}
(-1)^{|E(F)|-|E(J)|}  = t(G,H).
\end{align}

\subsection{Local convergence of a graph sequence}

Let $G=(V,E)$ be a graph with degrees bounded by $D$, and fix an
integer $r\ge 0$. For $v\in V$, let $N(v)=N_G(v)$ denote set of nodes
adjacent to $v$. For $v\in V$, let $B(v,r)=B_G(v,r)$ denote the
subgraph of $G$ induced by all nodes at a distance at most $r$ from
$v$ (the {\it $r$-neighborhood} of node $v$, or the {\it $r$-ball}
about $v$). So $B(v,1)$ is the subgraph induced by $N(v)\cup\{v\}$.
We consider $B(v,r)$ as a rooted graph, i.e., the node $v$ is
specified as the center of $B(v,r)$. For fixed $r$, there is a finite
number (depending on $D$ and $r$ only) of possible $r$-balls
$U_1,\dots,U_N$. Let $\mu(G,U_i)$ denote the fraction of nodes of $G$
whose $r$-neighborhood is $U_i$. We can think of $\mu$ as a
probability distribution on possible $r$-balls.

We call a sequence $(G_n)$ of graphs with degrees bounded by $D$ {\it
locally convergent}, or {\it left-convergent} if $\mu(G_n,U)$ tends
to a limit $\mu(U)$ as $n\to\infty$ for every $r$ and every $r$-ball
$U$.

It is easy to see that if $(G_n)$ is left-convergent, then for every
connected graph $F$ the sequence $\hom(F,G_n)/|G_n|$ is convergent.
It is also quite easy to see that this property is sufficient for
local convergence. By formulas \eqref{zGH} and \eqref{t2z}, we could
also require that $z(F,G_n)$ is convergent for every simple graph
$F$.

We can also talk about {\it right-convergence}, meaning that
$\ln\hom(G_n,H)/|G_n|$ tends to a limit as $n\to\infty$ (here $H$ can
be a weighted graph; we'll see that the normalization is
appropriate). However, we will not formally define right-convergence,
since there seem to be different ways of specifying which weighted
graphs $H$ to consider here. Rather, we want to find a reasonable
class of weighted graphs $H$ for which right-convergence is
equivalent to local (left-) convergence.

\subsection{Chromatic polynomial}

Let $G=(V,E)$ be a simple graph with $n$ nodes. For every nonnegative
integer $y$, we denote by $\chr(G,y)$ the number of $y$-colorations
of $G$. Note that $\chr(G,q)=\hom(G,K_q)$.

Let $\chr_0(G,k)$ denote the number of $k$-colorations of $G$ in
which all colors occur. Then clearly
\begin{equation}\label{CHROM-COL}
\chr(G,y)=\sum_{k=0}^\infty \chr_0(G,k)\binom{y}{k}.
\end{equation}
This implies that $\chr(G,y)$ is a polynomial in $y$ with leading
term $y^n$ and constant term $0$, which is called the {\it chromatic
polynomial} of $G$. It is easy to see that if $G$ is a simple graph,
then for every $e\in E(G)$,
\begin{equation}\label{EQ:CHROM-REC}
\chr(G,q)=\chr(G\setminus e,q)-\chr(G/e,q),
\end{equation}
where $G\setminus e$ and $G/e$ arise from $G$ by deleting and
contracting $e$, respectively (in $G/e$, parallel edges are collapsed
to one). From this recurrence a number of properties of the chromatic
polynomial are easily proved, for example, that its coefficients
alternate in sign.

The coefficient of the linear term in the chromatic polynomial is
\begin{equation}\label{EQ:PSI-SUM}
\csp(G)=\sum_{G'\in\CSpan} (-1)^{|E(G')|},
\end{equation}
which is called the {\it Crapo invariant} or {\it chromatic
invariant} of the graph. Trivially, $\csp(G)=0$ if $G$ is
disconnected. It follows from \eqref{EQ:CHROM-REC} that if $G$ is a
simple graph, then for every $e\in E(G)$,
\begin{equation}\label{EQ:BETA-REC}
\csp(G)=\csp(G\backslash e)-\csp(G/e).
\end{equation}
This implies by induction that $(-1)^{|G|-1}\csp(G)>0$ if $G$ is
connected.

\subsection{Subtree counts}

Let $\tree(G)$ denote the number of spanning trees in $G$. More
generally, let $\tree(G;v,k)$ denote the number of subtrees of $G$
with $k$ nodes, containing a given node $v\in V(G)$. Let $T_D$ be an
infinite rooted $D$-ary tree, with root $r$. The following formula is
well known (\cite{Stan}, Theorem 5.3.10):
\begin{equation}\label{EQ:TREENUM}
\tree(T_D;r,k) = \frac{1}{k}\binom{kD}{k-1}.
\end{equation}
The right hand side has this more convenient estimate:
\[
\frac{1}{k}\binom{kD}{k-1} \le \frac{(kD)^{k-1}}{k!} <
\frac{e^kD^{k-1}}{k\sqrt{2\pi k}} < \frac{(eD)^{k-1}}{2}
\]
(assuming $k\ge 3$, but the bound is trivially true for $k=2$ as
well).

\begin{lemma}\label{LEM:SUBTREECOUNT}
Let $G$ be a graph with maximum degree $D$ and let $v\in V(G)$.

\smallskip

{\rm (a)} The number of subtrees of $G$ with $k$ nodes containing $v$
is at most $\frac{1}{k}\binom{kD}{k-1}$.

\smallskip

{\rm (b)} The number of connected subgraphs of $G$ with $m$ edges
containing $v$ is at most $\frac{1}{m+1}\binom{(m+1)D}{m}$.

\smallskip

{\rm (c)} The number of connected induced subgraphs of $G$ with $k$
nodes containing $v$ is at most $\frac{1}{k}\binom{kD}{k-1}$.

\smallskip

{\rm (d)} The number of connected subgraphs of $G$ with $k$ nodes
containing $v$ is at most $2^{Dk}$.
\end{lemma}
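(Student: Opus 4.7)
The plan for all four parts is to exhibit an injection from the family being counted into a set of subtrees of the infinite rooted $D$-ary tree $T_D$ and then invoke \eqref{EQ:TREENUM}. I would fix once and for all a linear order on $V(G)$, which induces an order on each $N_G(u)$. For (a), I would root a $k$-node subtree $T$ of $G$ at $v$ and define $\phi_T : V(T) \to V(T_D)$ by $\phi_T(v) = r$ and, inductively, $\phi_T(u) = $ the $i$-th child of $\phi_T(\text{parent}_T(u))$, where $i$ is the position of $u$ in $N_G(\text{parent}_T(u))$. The image is a $k$-node subtree of $T_D$ containing $r$, and reading off the position-labels along each root-to-node path recovers $T$, so $T \mapsto \phi_T(T)$ is injective; \eqref{EQ:TREENUM} finishes (a).

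Part (c) reduces to (a): to a connected induced subgraph $H$ on $k$ nodes containing $v$ I would associate the canonical BFS spanning tree of $G[V(H)]$ rooted at $v$, which is a $k$-node subtree of $G$ containing $v$; since $H$ is induced, $V(H)$ determines $H$, so the map is injective. For (d), I would canonically BFS-order $V(H)$ as $u_1 = v, u_2, \ldots, u_k$ and encode $H$ by the tuple $(S_1, \ldots, S_k)$, with $S_i \subseteq \{1, \ldots, D\}$ the set of positions in $N_G(u_i)$ of those $G$-neighbors of $u_i$ joined to it by an edge of $H$. Processing the tuple in order reconstructs both the $u_i$'s and the edges of $H$, so the encoding is injective, and there are $(2^D)^k = 2^{Dk}$ such tuples.

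The substantial case is (b). I would run a canonical DFS from $v$ in $H$; this produces a DFS tree together with $m - (|V(H)| - 1)$ back edges, each having a well-defined ``descendant endpoint''. I would form an auxiliary rooted tree $\tilde H$ with vertex set $V(H) \cup \{\ell_e : e \text{ a back edge}\}$, whose edges are the DFS tree edges together with, for each back edge $e = (a, b)$ with $a$ the descendant, a new edge from $a$ to the phantom leaf $\ell_e$; a direct count gives $|V(\tilde H)| = m + 1$. At each $u \in V(H)$, I would label each child edge of $\tilde H$ by the position in $N_G(u)$ of the corresponding $G$-neighbor (a phantom leaf $\ell_e$ at $u$ inheriting the position of the ancestor endpoint of $e$), yielding distinct labels in $\{1, \ldots, D\}$, and use these labels to embed $\tilde H$ into $T_D$ as an $(m+1)$-node subtree containing $r$. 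A top-down traversal of the embedded subtree then reconstructs $H$ while maintaining the visited set, using it to distinguish a tree-edge child (new $G$-neighbor, recurse) from a phantom leaf (already-visited $G$-neighbor, stop), so $H \mapsto \phi(\tilde H)$ is injective and \eqref{EQ:TREENUM} with $k = m + 1$ yields the bound. The main obstacle will be exactly this construction---arranging the phantom-leaf tree to have precisely $m + 1$ nodes and verifying the labelling gives an injective encoding; once that is in place, the other three parts follow as routine variants of the same canonical-ordering idea.
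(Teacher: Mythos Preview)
Your argument is correct and follows essentially the same approach as the paper: for each part you inject the family being counted into subtrees of the $D$-ary tree $T_D$ via neighbor-position labels and invoke \eqref{EQ:TREENUM}. The only difference is that for (b) the paper uses an arbitrary spanning tree of $F$ with an arbitrary orientation of the non-tree edges (rather than your DFS tree), and recovers $F$ simply by reading each edge of the image back into $G$ via its label---no visited-set bookkeeping is needed, since the edge set $\{\psi(x)\psi(y):xy\in E(F')\}$ already equals $E(F)$.
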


\begin{proof}
{ Throughout this proof, we drop the subscript $D$ from $T_D$.}

(a) It suffices to note the easy fact that for any graph $G$ with
maximum degree $D$, the number of subtrees with $k$ nodes containing
$v$ is not larger than the corresponding number in $T$. Indeed, we
could replace $T$ by the rooted tree in which the root has degree
$D$, all the other nodes have (down-)degree $D-1$.

(b) Similarly as in (a) it suffices to prove that for any graph $G$
with maximum degree $D$, the number of connected subgraphs of $G$
with $m$ edges containing $v$ is not larger than the corresponding
number in $T$.

Let us label the edges going out of any given node $v$ of $G$
arbitrarily by $1,\dots,d_G(v)$. Also, label the edges going from a
node of $T$ to its children arbitrarily $1,\dots,D$. Let $F$ be a
connected subgraph of $G$ containing $v$. Let $T_0$ be a spanning
tree in $F$. Orient $F$ so that $T_0$ is oriented away from the root
$v$ (the other edges are oriented arbitrarily).

There is a unique embedding $\phi:~V(F)\hookrightarrow V(T)$ that
preserves the edges of $F$, and for at each node $u\in V(F)$, the
edges of $T_0$ leaving $u$ in $T_0$ are mapped onto edges in $T$ with
the same label. For every edge $ab\in E(F)\setminus E(T_0)$, map it
onto the edge of $T$ leaving $\phi(a)$ with the same label. This
assigns to $F$ a subtree $F'$ of $T$ with $m$ edges.

Clearly, $F'$ uniquely determines $F$: starting from the root, we can
map the edges of $T$ back into $G$. This proves (b).

(c) is a trivial consequence of (a).

(d) We can select the edges of $F$ incident with $v$ in less than
$2^D$ ways; then going to one of the neighbors $v_1$ of $v$, we can
select the set of edges of $F$ incident with $v_1$ in less than $2^D$
ways, etc. Repeating this $k$ times, we have finished selecting $F$.
\end{proof}

\subsection{Weighted subtrees and weighted chromatic invariants}

Let $H$ be a weighted graph. We extend the definitions of the number
of subtrees and of the chromatic invariant to weighted graphs:
\[
\csp(H)=\sum_{F\in\CSpan(H)}  (-1)^{|E(F)|} \prod_{e\in F}\beta_e,
\qquad \tree(H)=\sum_{F\in\SpT(G)} \prod_{e\in F} \beta_e.
\]
We note that if all edgeweights of $H$ are $1$, then $\tree(H)$ is
the number of spanning trees and $\csp(H)$ is the chromatic invariant
of the underlying simple graph. (The nodeweights play no role in
these definitions.)

For $e\in E(H)$, let $H-e$ denote the weighted graph obtained from
$H$ by deleting the edge $e$. We need two versions of the operation
of contracting an edge. Let $H/e$ denote the graph obtained by
contracting $e$, where the arising parallel edges are replaced by a
single edge whose weight is the sum of the weights of its pre-images.
Let $H\div e$ denote the graph obtained from $H$ similarly, except
that the new edgeweight is the sum minus the product of the weights
of its pre-images. (Note that for graphs $H$ with edgeweights between
$0$ and $1$, the resulting edgeweight again lies between $0$ and $1$,
which is not necessarily the case for $H/ e$). The two quantities
introduced above satisfy the recurrence relations
\begin{equation}\label{EQ:CSP-REC}
\csp(H) = \csp(H-e) - \beta_e\csp(H\div e),\qquad \tree(H) =
\tree(H-e) + \beta_e \tree(H/e).
\end{equation}
For graphs with edge weights between $0$ and $1$, the first of these
relations implies that $(-1)^{|H|-1}\csp(H)>0$.

Let $G$ be a simple graph and $H$, a weighted graph, and let
$\tilde\alpha$ be the normalized weight
$\tilde\alpha_i=\alpha_i(F)/\alpha_F$. By a {\it random map} $G\to H$
we mean a map $V(G)\to V(H)$, where the image of each node of $G$ is
chosen independently from the probability distribution
$\tilde\alpha$.

For any map $\phi:~V(G)\to V(H)$, we can define a weighting of $G$,
where the weight of an edge of $G$ is the weight of its image in $H$.
We denote this weighted graph by $G^\phi$. Note that for a random map
$G\to H$ we have
\[
t(G,H)= \E_\phi \prod_{ij\in E(G)}\beta_{\phi(i)\phi(j)},
\]
and so
\begin{align}\label{EQ:CR-Z}
z(G,H) &= \sum_{F\in\CSpan(G)}(-1)^{|E(F)|} t(F,H) \nonumber\\
&= \sum_{F\in\CSpan(G)} (-1)^{|E(F)|} \E_\phi \prod_{ij\in
E(F)}\beta_{\phi(i)\phi(j)} = \E_\phi \csp(G^\phi).
\end{align}

\begin{lemma}\label{LEM:SP-TREE}
Let $H$ be a weighted graph with node weights $1$ and edge weights in
$[0,1]$, then
\[
|\csp(H)| \le \tree(H).
\]
\end{lemma}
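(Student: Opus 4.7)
My plan is to prove the inequality by induction on the number of edges of $H$, using the parallel recurrences \eqref{EQ:CSP-REC} for $\csp$ and $\tree$ together with a monotonicity comparison between the two kinds of contraction.

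For the base case (zero edges), either $H$ is a single vertex, in which case $\csp(H)=\tree(H)=1$, or $H$ is disconnected, in which case both $\csp(H)$ and $\tree(H)$ vanish. In either case the inequality holds trivially. For the inductive step, pick any edge $e\in E(H)$. Both $H-e$ and $H\div e$ have strictly fewer edges than $H$, and a direct check shows their edge weights still lie in $[0,1]$: deletion is obvious, and for $\div$ the ``probabilistic OR'' $a+b-ab$ of two numbers in $[0,1]$ again lies in $[0,1]$. So the inductive hypothesis applies to both, giving $|\csp(H-e)|\le\tree(H-e)$ and $|\csp(H\div e)|\le\tree(H\div e)$.

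Applying the first recurrence in \eqref{EQ:CSP-REC} and the triangle inequality,
\[
|\csp(H)| \;\le\; |\csp(H-e)|+\beta_e\,|\csp(H\div e)| \;\le\; \tree(H-e)+\beta_e\,\tree(H\div e).
\]
The key comparison is now $\tree(H\div e)\le \tree(H/e)$. This follows because the only difference between $H\div e$ and $H/e$ is in the weights of those edges that arose from collapsing parallel pairs: in $H/e$ the new weight is $a+b$, while in $H\div e$ it is $a+b-ab\le a+b$ (using $a,b\in[0,1]$). All other edges agree, and $\tree$ is a sum of products of non-negative edge weights, so it is monotone nondecreasing in each edge weight. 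Combining this with the second recurrence in \eqref{EQ:CSP-REC} yields
\[
\tree(H-e)+\beta_e\,\tree(H\div e) \;\le\; \tree(H-e)+\beta_e\,\tree(H/e) \;=\; \tree(H),
\]
which completes the induction.

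There is no real obstacle here; the only thing that requires a moment's care is the verification that $\div$ preserves the hypothesis ``edge weights in $[0,1]$'' (needed so the induction closes) and that $\tree(H\div e)\le \tree(H/e)$ (which is exactly where the sign mismatch between the two recurrences is absorbed into the monotonicity of $\tree$).
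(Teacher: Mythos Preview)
Your proof is correct and follows essentially the same route as the paper: induction on the number of edges via the recurrences \eqref{EQ:CSP-REC}, combined with the comparison $\tree(H\div e)\le\tree(H/e)$ (which the paper phrases as ``the edgeweights in $H\div e$ are not larger than the corresponding edgeweights in $H/e$''). Your version is a bit more explicit about the base case and about checking that $H\div e$ keeps its weights in $[0,1]$ so the induction closes, but the argument is the same.
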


\begin{proof}
By equation \eqref{EQ:CSP-REC},
\[
|\csp(H)| \le |\csp(H-e)| + \beta_e(H) |\csp(H\div e)|.
\]
Since the edgeweights in $H\div e$ are not larger than the
corresponding edgeweights in $H/e$, we get by induction on the number
of edges that
\[
|\csp(H)| \le \tree(H-e) + \beta_e(H) \tree(H/e) = \tree(H).
\]
\end{proof}

To state our next lemma, we define
\[
c(H)= \max_{u\in V(H)} \sum_{v\in V(H)} \frac{\alpha_v}{\alpha_H}
|\beta_{uv}|.
\]

\begin{lemma}\label{LEM:SUMTREE}
Let $G$ be a simple graph, and let $H$ be a weighted graph. Let
$\phi$ be a random map $G\to H$. Then
\[
\E_\phi|\tree(G^\phi)| \le \tree(G) c(H)^{|G|-1}.
\]
\end{lemma}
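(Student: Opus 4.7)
My plan is to expand $\tree(G^\phi)$ as a sum over the spanning trees of the simple graph $G$, pass to absolute values by the triangle inequality, and then bound the expectation of each summand by $c(H)^{|G|-1}$ via a leaf-stripping induction on the tree.

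Explicitly, since $G^\phi$ has the same underlying graph as $G$ with edge weights $\beta_{\phi(i)\phi(j)}$, by the definition of $\tree$ we have
\[
\tree(G^\phi) = \sum_{T\in\SpT(G)} \prod_{ij\in E(T)} \beta_{\phi(i)\phi(j)}.
\]
Applying the triangle inequality and taking expectations,
\[
\E_\phi|\tree(G^\phi)| \le \sum_{T\in\SpT(G)} \E_\phi \prod_{ij\in E(T)} |\beta_{\phi(i)\phi(j)}|.
\]
Since $|\SpT(G)|=\tree(G)$, it suffices to show that for every spanning tree $T$ of $G$,
\[
\E_\phi \prod_{ij\in E(T)} |\beta_{\phi(i)\phi(j)}| \le c(H)^{|G|-1}.
\]

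For this I would induct on $|V(T)|$. The base case $|V(T)|=1$ is trivial: the empty product equals $1=c(H)^0$. For the inductive step, pick a leaf $\ell$ of $T$ with unique neighbor $p$, and let $T'=T-\ell$. Since the coordinates of $\phi$ are independent, I can condition on $\phi|_{V(T')}$ and average over $\phi(\ell)$ first:
\[
\E_{\phi(\ell)} |\beta_{\phi(\ell)\phi(p)}| = \sum_{v\in V(H)} \tilde\alpha_v |\beta_{v\phi(p)}| \le c(H)
\]
by the very definition of $c(H)$. Pulling this factor out and applying the inductive hypothesis to $T'$ (a tree on $|V(T)|-1$ vertices) yields the claimed bound. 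Summing over $T\in\SpT(G)$ then completes the proof.

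The only subtlety worth flagging is that the factorization of the expectation in the inductive step requires the joint independence of the coordinates of $\phi$, but this is immediate from the definition of a random map $G\to H$; beyond that, the argument is entirely routine and I do not foresee a genuine obstacle.
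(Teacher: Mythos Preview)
Your proposal is correct and follows essentially the same route as the paper: expand $\tree(G^\phi)$ over spanning trees of $G$, then bound each tree's expected weight by $c(H)^{|G|-1}$ via a leaf-stripping induction exploiting the independence of the coordinates of $\phi$. The only cosmetic difference is that the paper disposes of the absolute values at the outset by noting one may assume the edgeweights of $H$ are nonnegative, whereas you carry $|\beta_{\phi(i)\phi(j)}|$ through the computation; the two are equivalent.
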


\begin{proof}
We may assume the edgeweights in $H$ are nonnegative. We have
\begin{equation}\label{EQ:EPHI}
\E_\phi(\tree(G^\phi)) = \E_\phi\Bigl(\sum_{T\in\SpT(G)} \prod_{ij\in
E(T)} \beta_{\phi(i)\phi(j)} \Bigr) = \sum_{T\in\SpT(G)}
\E_\phi\Bigl(\prod_{ij\in E(T)} \beta_{\phi(i)\phi(j)} \Bigr)
\end{equation}
Fix the tree $T$, and let $p$ be one of its endpoints, with single
neighbor $q$. Then picking the random map $\psi$ of
$V(G)\setminus\{p\}$ first and the the image $u$ of $p$ last, we get
\begin{align*}
\E_\phi\Bigl(\prod_{ij\in E(T)} \beta_{\phi(i)\phi(j)} \Bigr)&=
\E_\psi\Bigl(\prod_{ij\in E(T-p)} \beta_{\phi(i)\phi(j)}
\E_u(\beta_{\psi(q)u})\Bigr)\\
&= \E_\psi\Bigl(\prod_{ij\in E(T-p)} \beta_{\phi(i)\phi(j)} \sum_u
\frac{\alpha_u}{\alpha_H} \beta_{\psi(q)u}\Bigr)\\
&\le \E_\psi\Bigl(\prod_{ij\in E(T-p)} \beta_{\phi(i)\phi(j)}
c(H)\Bigr) = c(H) \E_\psi\Bigl(\prod_{ij\in E(T-p)}
\beta_{\phi(i)\phi(j)}\Bigr),
\end{align*}
whence by induction
\[
\E_\phi\Bigl(\prod_{ij\in E(T)}\beta_{\phi(i)\phi(j)} \Bigr)\le
c(H)^{|T|-1}.
\]
By \eqref{EQ:EPHI}, the Lemma follows.
\end{proof}

\begin{lemma}\label{LEM:ZGH1}
Let $G$ be a simple graph, and let $H$ be a weighted graph with
edge weights in $[0,1]$. Then
\[
|z(G,H)| \le \tree(G) c(H)^{|G|-1}.
\]
\end{lemma}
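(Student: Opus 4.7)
The plan is to simply chain together the three preceding results. By equation \eqref{EQ:CR-Z} we have the identity
\[
z(G,H) = \E_\phi \csp(G^\phi),
\]
where $\phi$ is a random map $G\to H$. The first step is to pull the absolute value inside the expectation via the triangle inequality, giving $|z(G,H)| \le \E_\phi |\csp(G^\phi)|$.

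Next I would observe that since $H$ has edge weights in $[0,1]$, the weighted graph $G^\phi$ inherits edge weights in $[0,1]$ as well (each edge weight of $G^\phi$ is simply some $\beta_{\phi(i)\phi(j)}$). Therefore Lemma \ref{LEM:SP-TREE} applies to $G^\phi$ and yields $|\csp(G^\phi)| \le \tree(G^\phi)$ pointwise in $\phi$. Taking expectations gives
\[
|z(G,H)| \le \E_\phi \tree(G^\phi) = \E_\phi |\tree(G^\phi)|,
\]
where the last equality holds because $\tree(G^\phi)$ is already nonnegative (it is a sum of products of nonnegative edge weights).

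Finally, the right-hand side is exactly the quantity bounded in Lemma \ref{LEM:SUMTREE}, so
\[
|z(G,H)| \le \tree(G)\, c(H)^{|G|-1},
\]
which is the desired bound. There is no real obstacle here — each intermediate inequality is a direct application of a lemma proved just above, and the only thing worth checking is that the hypotheses carry over (namely, that the edge weights of $G^\phi$ lie in $[0,1]$ so that Lemma \ref{LEM:SP-TREE} applies, and that node weights are irrelevant in the definitions of $\csp$ and $\tree$, as noted in the paragraph following their definitions).
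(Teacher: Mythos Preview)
Your proof is correct and is essentially identical to the paper's own proof: the paper also chains \eqref{EQ:CR-Z}, Lemma~\ref{LEM:SP-TREE}, and Lemma~\ref{LEM:SUMTREE} in exactly this order, writing the single line $|z(G,H)|\le \E_\phi |\csp(G^\phi)| \le \E_\phi \tree(G^\phi) \le \tree(G) c(H)^{|G|-1}$. Your additional remarks about the hypotheses (edge weights of $G^\phi$ lying in $[0,1]$ and node weights being irrelevant) are accurate and simply make explicit what the paper leaves implicit.
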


\begin{proof}
Let $\phi$ be a random map $G\to T$. Then by \eqref{EQ:CR-Z} and
Lemmas \ref{LEM:SP-TREE} and \ref{LEM:SUMTREE},
\[
|z(G,H)|\le \E_\phi |\csp(G^\phi)| \le \E_\phi \tree(G^\phi) \le
\tree(G) c(H)^{|G|-1}.
\]
\end{proof}

\section{Left-convergence implies right-convergence}

Our first main theorem is the following.

\begin{theorem}\label{THM:HOM-CONV}
Let $(G_n)$ be a left-convergent sequence of graphs with maximum
degree at most $D$. Let $H$ be a weighted graph with
$0\le\beta_{ij}\le 1$ and $c(\overline{H})< 1/(2D)$. Then $\ln
t(G_n,H)/|G_n|$ is convergent as $n\to\infty$.
\end{theorem}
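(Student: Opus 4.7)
The plan is to represent $\ln t(G_n,H)$ as an absolutely convergent cluster expansion whose individual contributions are local functions of $G_n$, and then use left-convergence of the rooted-ball distributions to pass to the limit.

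Writing each factor in $t(G,H)=\E_\phi\prod_{ij\in E(G)}\beta_{\phi(i)\phi(j)}$ as $1-(1-\beta_{\phi(i)\phi(j)})$, expanding, and grouping the resulting edge subsets $F\subseteq E(G)$ by their connected components gives, via independence of $\phi$ across components and the definition \eqref{zGH} of $z$, the polymer representation
$$t(G,H)=\sum_{\pi\in\Pi(V(G))}\prod_{W\in\pi}z(G[W],\overline{H}),$$
with the convention $z=1$ on singletons and $z=0$ when $G[W]$ is disconnected. This is the partition function of a polymer gas whose polymers are connected vertex subsets $W\subseteq V(G)$ with $|W|\ge 2$, compatible when disjoint, and with activities $\zeta(W)=z(G[W],\overline{H})$. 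The standard Mayer--Penrose cluster expansion then yields $\ln t(G,H)=\sum_C w(C)$, and regrouping the clusters by their support $S(C)=\bigcup_{W\in C}W$ gives
$$\ln t(G,H)=\sum_{\substack{S\subseteq V(G)\\ G[S]\text{ connected},\ |S|\ge 2}}\Phi(G[S]),$$
where $\Phi(G[S])$ depends only on the isomorphism type of $G[S]$.

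To verify absolute convergence under $c(\overline{H})<1/(2D)$, I combine Lemma~\ref{LEM:ZGH1} (giving $|\zeta(W)|\le\tree(G[W])\,c(\overline{H})^{|W|-1}$) with the observation that pairs $(W,T)$ with $T$ a spanning tree of $G[W]$ biject, via $W=V(T)$, with the $k$-vertex subtrees of $G$ through a fixed vertex $v$. By Lemma~\ref{LEM:SUBTREECOUNT}(a),
$$\sum_{\substack{W\ni v\\ |W|=k}}|\zeta(W)|\ \le\ \frac{1}{k}\binom{kD}{k-1}c(\overline{H})^{k-1}.$$
The threshold $c(\overline{H})<1/(2D)$---equivalently $D\,c(\overline{H})<1/2$, which is a Dobrushin-style total-influence bound---is calibrated so that, inserted into a Kotecký--Preiss criterion with a weight $a(W)=\gamma|W|$ for a suitable $\gamma>0$, this estimate gives absolute convergence of the cluster expansion together with a uniform bound $\sum_{C:\,v\in S(C)}|w(C)|\le K$. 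The main technical obstacle is precisely this calibration: the looser bound $(eD)^{k-1}$ in place of $\binom{kD}{k-1}$ would weaken the allowed threshold to $c(\overline{H})<1/(eD)$, so the exact binomial must be used and the Dobrushin form of the smallness condition exploited.

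Finally, I localize. Any connected $S\ni v$ with $|S|\le R$ sits in the induced ball $B_G(v,R-1)$, so the truncation
$$F_v^{(R)}(G)\ =\ \sum_{\substack{v\in S\subseteq V(G),\ G[S]\text{ connected}\\ 2\le|S|\le R}}\frac{\Phi(G[S])}{|S|}$$
depends only on the rooted ball $B_G(v,R-1)$, and double-counting in $\sum_S\Phi(G[S])$ gives $\ln t(G,H)/|G|=(1/|G|)\sum_{v\in V(G)}F_v^{(\infty)}(G)$. The tail $\sup_{v,G}|F_v^{(\infty)}-F_v^{(R)}|\to 0$ as $R\to\infty$ by the cluster-expansion bounds above. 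For each fixed $R$, $(1/|G_n|)\sum_v F_v^{(R)}(G_n)$ is a linear functional of the empirical distribution of rooted $(R-1)$-balls in $G_n$, so it converges as $n\to\infty$ by left-convergence. Exchanging the two limits, justified by the uniform tail bound, yields convergence of $\ln t(G_n,H)/|G_n|$.
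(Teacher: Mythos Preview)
Your outline is precisely the paper's second proof (the Mayer/cluster expansion route): the polymer representation $t(G,H)=\stab(\GG,z)$ is Lemma~\ref{LEM:HOM-STAB}, the expansion $\ln t(G,H)=\sum_{F\in\ConIn(G)}\Phi(F)$ with $\Phi$ depending only on the isomorphism type of $F$ is \eqref{EQ:LNT-V}, and the final localisation and interchange of limits is exactly how that proof concludes.

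The substantive gap is the sentence ``The threshold $c(\overline{H})<1/(2D)$ \dots\ is calibrated so that, inserted into a Kotecký--Preiss criterion \dots, this estimate gives absolute convergence.'' This is asserted but not checked, and in fact it fails. With $a(W)=\gamma|W|$ the Kotecký--Preiss condition reduces (after your union bound over vertices of $W$) to
\[
\sum_{k\ge 2}\frac{1}{k}\binom{kD}{k-1}\,c(\overline{H})^{\,k-1}e^{\gamma k}\ \le\ \gamma .
\]
Already the single term $k=2$ gives $D\,c(\overline{H})\,e^{2\gamma}\le\gamma$, hence $D\,c(\overline{H})\le\max_{\gamma>0}\gamma e^{-2\gamma}=1/(2e)$; so the best this line can yield is $c(\overline{H})<1/(2eD)$, and once the higher terms are included the paper's own optimisation (Lemma~\ref{lem:conv-cond} and the choice $b=2/5$ after \eqref{Kb}) only reaches $c(\overline{H})<1/(KD)$ with $K\approx 7.96$. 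Using the exact binomial rather than $(eD)^{k-1}$ does not change this: the loss is already incurred by the exponential weight $e^{\gamma|W|}$ that any Kotecký--Preiss/Dobrushin polymer criterion requires.

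In the paper the full $1/(2D)$ threshold is obtained by a genuinely different argument, the Dobrushin Uniqueness proof (Theorem~\ref{DU}, Proposition~\ref{PROP:NUGZ}, Lemma~\ref{KK'L}), which bounds one-site conditional total-variation distances directly rather than going through a polymer expansion. So your proposal, as written, proves the theorem only under the stronger hypothesis $c(\overline{H})<1/(8D)$; to reach $1/(2D)$ you need to replace the convergence step by a Dobrushin-type coupling argument.
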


Recall that the condition on $c(\overline{H})$ means that
\begin{equation}\label{EQ:CLOSE-1}
\sum_{k\in V(H)}\frac{\alpha_k}{\alpha_H}(1-\beta_{ik})<\frac{1}{2D}
\end{equation}
for all $i \in V(H)$. It is clear that we may assume that
$\alpha_H=1$.

We give two proofs of this Theorem: one, using the Dobrushin
Uniqueness Theorem, and another one using cluster expansion
techniques. In fact, the second proves a weaker result only, where in
\eqref{EQ:CLOSE-1}, the $2D$ in the denominator is replaced by the
stronger condition $8D$. The reason for giving it at all is that (a)
it uses a completely different technique, (b) it gives approximation
formulas for $\ln t(G,H)/|G|$ with explicit error bounds, and (c) the
method can also be used to prove the converse of the theorem (see
Theorem \ref{THM:RIGHT2LEFT}).

\subsection{Proof via Dobrushin Uniqueness}

We briefly recall two basic notions (see e.g. \cite{LPW} for a more
informative discussion):

\mn (i)  A {\em coupling} of probability distributions $\mu$ and
$\nu$ is a random pair $(X,Y)$ defined on some probability space such
that the marginal distribution of $X$ is $\mu$ and that of $Y$ is
$\nu$. A coupling of (not necessarily real-valued) random variables
$\phi,\psi$ is a random pair $(X,Y)$ such that the laws of $X$ and
$Y$ are those of $\phi$ and $\psi$ (respectively).

\mn (ii)  The {\em total variation distance} between discrete
probability distributions $\mu$ and $\nu$ on $\Omega$ is
$\|\nu-\mu\|_{_{{\rm TV}}}= \frac{1}{2}\sum_{\omega\in
\Omega}|\mu(\omega)-\nu(\omega)|$; it is equal to the minimum over
couplings $(X,Y)$ of $\mu$ and $\nu$ of $\Pr(X\neq Y)$. The total
variation distance of a pair of random variables $\phi,\psi$ is the
total variation distance of their distributions.

\medskip

For the rest of this section we fix $H$ as in Theorem
\ref{THM:HOM-CONV}, and set $t(G,H)=\hom(G,H)=t(G)$. For any $G$ and
$\phi:V(G)\rightarrow  V(H)$, set
\[
W(\phi) = \prod_{u\in V(G)} \alpha_{\phi(u)} \prod_{uv\in E(G)}
\beta_{\phi(u),\phi(v)}.
\]
The natural associated probability measure on $V(H)^{V(G)}$ is given
by $\Pr_{G}(\phi)\propto W(\phi)$ (that is,
$\Pr_{G}(\phi)=W(\phi)/t(G)$). We write $\E_{G}$ for expectation with
respect to this measure.

Given $\gL\sub V(G)$ and $\ga:V(G)\sm\gL\ra V(H)$, let
$\phi_{\ga}:V(G)\ra V(H)$ be chosen according to
\[
\mbox{$\Pr(\phi_{\ga}=\tau) =\Pr_G(\phi =\tau|\mbox{$\phi\equiv \ga $
off $\gL$}) ~~~~~~ \forall\tau:V(G)\ra V(H).$}
\]

For $\gz=(\gz_1,\dots,\gz_s)$ with $\gz_i\in V(H)$, define the
probability distribution $\nu_{\gz}$ on $V(H)$ by
\[
\nu_{\gz}(i)\propto \ga_i\prod_{j=1}^s\gb_{i,\gz_j}.
\]
(Thus $\nu_{\gz}$ is the conditional distribution of $\phi(v)$ given
that $d(v)=s$ and the $\phi$-values of the neighbors of $v$ are
$\gz_1,\dots,\gz_s$.) The following version of Dobrushin Uniqueness
is convenient for our purposes, but see e.g. \cite{Georgii} for a
more usual statement.

\begin{theorem}\label{DU}
Let $G$ be a graph with maximum degree at most $D$, and let $H$ be a
weighted graph such that with notation as above, there is a
$0<\kappa<1$ such that for any $s\leq D$ and
$\gz=(\gz_1,\dots,\gz_s)$ and $\gz'=(\gz_1,\dots,\gz_{s-1},\gz'_s)$
with $\gz_1,\dots,\gz_s,\gz_s'\in V(H)$, we have
\begin{equation}\label{nugz}
\|\nu_{\gz}-\nu_{\gz'}\|_{_{{\rm TV}}}\leq \frac\kappa{D}.
\end{equation}
Let $\gL\sub V(G)$, $\gL'=V(G)\setminus \gL$ and $\ga,\gb:~\gL'\to
V(H)$. Then there is a coupling
$(\tilde{\phi}_{\ga},\tilde{\phi}_{\gb})$ of $\phi_{\ga}$ and
$\phi_{\gb}$ such that
\[
\Pr(\tilde{\phi}_{\ga}\neq \tilde{\phi}_{\gb}) \leq
\kappa^{d(x,\gL')} ~~~~\forall x\in V(G).
\]
In particular, for any $\Omega\sub \gL$ the total variation distance
of the restrictions of $\phi_{\ga}$ and $\phi_{\gb}$ to $\Omega$ is
at most $\sum_{x\in \Omega}\kappa^{d(x,\gL')}$.
\end{theorem}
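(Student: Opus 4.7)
The plan is to prove Theorem~\ref{DU} via coupled single-site (heat-bath) dynamics, using a super-solution argument to control the per-site disagreement probability by $\kappa^{d(x,\gL')}$.

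First I would set up the Gibbs sampler on $V(H)^{V(G)}$: freeze the configuration on $\gL'$ to be $\ga$ (respectively $\gb$) and, at each step, pick $v\in\gL$ uniformly and resample $\phi(v)$ from $\nu_\zeta$, where $\zeta=(\phi(u))_{u\in N(v)}$. Because the measures $\nu_\zeta$ are precisely the single-site conditionals of $\Pr_G(\,\cdot\mid\phi\equiv\ga\text{ off }\gL)$, the chain has stationary law equal to that of $\phi_\ga$ (and similarly for $\gb$), and it is irreducible and aperiodic on the support of that law. I would then run two copies $\Phi^\ga_t,\Phi^\gb_t$ in lockstep, with identical initial values on $\gL$ and boundary values $\ga,\gb$ on $\gL'$, picking a common update site $v$ at each step and using the \emph{optimal} coupling of $\nu_\zeta$ and $\nu_{\zeta'}$, where $\zeta,\zeta'$ are the current neighbor profiles in the two copies. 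Under this coupling, the resampled values disagree with probability $\|\nu_\zeta-\nu_{\zeta'}\|_{_{\rm TV}}$; telescoping via single-coordinate substitutions turning $\zeta$ into $\zeta'$ and applying \eqref{nugz} at each step gives
\[
\|\nu_\zeta-\nu_{\zeta'}\|_{_{\rm TV}} \le \frac{\kappa}{D}\bigl|\{u\in N(v):\zeta_u\ne\zeta'_u\}\bigr|.
\]

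Setting $p_t(x)=\Pr(\Phi^\ga_t(x)\ne\Phi^\gb_t(x))$ and averaging over the uniform choice of update site, the above yields the linear recursion
\[
p_{t+1}(x)\le\Bigl(1-\tfrac{1}{|\gL|}\Bigr)p_t(x) + \tfrac{1}{|\gL|}\cdot\tfrac{\kappa}{D}\sum_{u\in N(x)}p_t(u) \qquad(x\in\gL),
\]
while $p_t(x)=\mathbf{1}[\ga(x)\ne\gb(x)]$ is constant in $t$ on $\gL'$. The key algebraic step is to verify that $q(x)=\kappa^{d(x,\gL')}$ is a super-solution of this recursion: for $x\in\gL$ with $d(x,\gL')=r\ge 1$, every neighbor $u$ satisfies $d(u,\gL')\ge r-1$, hence $q(u)\le\kappa^{r-1}$, and so $(\kappa/D)\sum_{u\in N(x)}q(u)\le\kappa\cdot\kappa^{r-1}=q(x)$. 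Since $p_0\le q$ pointwise (identically zero on $\gL$ by construction, at most $1=\kappa^0=q$ on $\gL'$), a one-line induction on $t$ yields $p_t\le q$ for every $t$.

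Finally, I would pass to the limit. By standard MCMC theory (irreducibility and aperiodicity on a finite state space) the marginal laws of $\Phi^\ga_t,\Phi^\gb_t$ converge to the laws of $\phi_\ga,\phi_\gb$, and any subsequential weak limit of the joint law on $V(H)^{V(G)}\times V(H)^{V(G)}$ supplies a coupling $(\tilde\phi_\ga,\tilde\phi_\gb)$ of $\phi_\ga,\phi_\gb$ satisfying $\Pr(\tilde\phi_\ga(x)\ne\tilde\phi_\gb(x))\le\limsup_t p_t(x)\le\kappa^{d(x,\gL')}$ for every $x\in V(G)$; the ``in particular'' clause about $\Omega\sub\gL$ then follows from the union bound. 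The main obstacle I anticipate is the bookkeeping in the telescoping step (checking that each intermediate tuple $(\gz_1,\dots,\gz_{i-1},\gz'_i,\gz_{i+1},\dots,\gz_s)$ is a legitimate argument of \eqref{nugz}), together with a bit of care in the convergence argument if some edge weights of $H$ vanish; neither is deep, but both merit attention.
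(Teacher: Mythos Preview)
The paper does not supply its own proof of Theorem~\ref{DU}; it is stated as a convenient version of Dobrushin Uniqueness, with a pointer to \cite{Georgii} for the standard formulation, and is then used as a black box. So there is nothing in the paper to compare your argument against.

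Your approach---coupled single-site Glauber dynamics, the telescoping bound $\|\nu_\zeta-\nu_{\zeta'}\|_{_{\rm TV}}\le(\kappa/D)\,|\{u:\zeta_u\ne\zeta'_u\}|$, the linear recursion for $p_t$, and the super-solution $q(x)=\kappa^{d(x,\gL')}$---is correct and is one of the classical routes to Dobrushin-type statements. The super-solution check is exactly right: each neighbor of a vertex at distance $r\ge1$ from $\gL'$ is at distance at least $r-1$, and there are at most $D$ neighbors, so $(\kappa/D)\sum_{u\in N(x)}q(u)\le\kappa\cdot\kappa^{r-1}=q(x)$.

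The one point that deserves a little more than a parenthetical is the caveat you already flag about vanishing edge weights. If some $\beta_{ij}=0$ (as in the hard-core model the paper mentions in Remark~\ref{1}), the Gibbs sampler need not be irreducible from an arbitrary initial configuration, so ``start both chains identically on $\gL$ and wait for convergence'' is not automatic. Two clean fixes: (i) perturb $H$ so that all edge weights are positive, run your argument for the perturbed model (the Dobrushin hypothesis \eqref{nugz} is stable under small perturbations), and pass to a weak limit of the resulting couplings; or (ii) bypass Markov chains entirely and build the coupling recursively layer by layer from $\gL'$ inward, which is closer to Dobrushin's original contraction argument on specifications. Either way this is a technicality, not a gap in the idea.
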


To apply this theorem, we need a couple of simple facts.

\begin{prop}\label{TVD}
Let $\gc_i\geq \mu_i,\nu_i\geq 0$ for $i=1,\dots,n$, $\gc=\sum
\gc_i$, $\mu=\sum\mu_i$, $\nu=\sum \nu_i$ and $\xi \geq 0$ and
suppose ($\gc \geq$) $\nu, \mu\geq \gc-\xi$. Then
\[
\sum_{i=1}^n \big| \frac{\mu_i}{\mu}-\frac{\nu_i}{\nu}\big| \leq
\frac{2\xi}{\gc-\xi}~;
\]
that is, the total variation distance of the distributions
$\{\mu_i/\mu\}_{i\in [n]}$ and $\{\nu_i/\nu\}_{i\in [n]}$ is at most
$\xi/(\gc-\xi)$.
\end{prop}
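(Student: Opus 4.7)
The distributions $\{\mu_i/\mu\}$ and $\{\nu_i/\nu\}$ are both obtained by renormalizing sequences that sit under the common ceiling $\{\gc_i\}$ and whose totals each fall short of $\gc$ by at most $\xi$. The natural plan is to split $\mu_i/\mu-\nu_i/\nu$ into a piece measuring how close the numerators are and a piece measuring how close the normalizations are, with both pieces controlled by the defects $\gc_i-\mu_i$ and $\gc_i-\nu_i$.

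Concretely, I would introduce the defects $a_i=\gc_i-\mu_i\ge 0$ and $b_i=\gc_i-\nu_i\ge 0$, with sums $A=\gc-\mu\le\xi$ and $B=\gc-\nu\le\xi$. The elementary observation $|\mu_i-\nu_i|=|a_i-b_i|\le a_i+b_i$ yields $\sum_i|\mu_i-\nu_i|\le A+B$. By symmetry I may assume $\mu\ge\nu$, so that $\mu-\nu=B-A\ge 0$. Using the identity
\[
\frac{\mu_i}{\mu}-\frac{\nu_i}{\nu}=\frac{\mu_i-\nu_i}{\mu}-\frac{\nu_i(\mu-\nu)}{\mu\nu},
\]
summing absolute values, and using $\sum_i\nu_i=\nu$ together with the triangle inequality, I get
\[
\sum_i\Bigl|\frac{\mu_i}{\mu}-\frac{\nu_i}{\nu}\Bigr|\le\frac{\sum_i|\mu_i-\nu_i|+(\mu-\nu)}{\mu}\le\frac{(A+B)+(B-A)}{\mu}=\frac{2B}{\mu}\le\frac{2\xi}{\gc-\xi},
\]
where the final step uses $B\le\xi$ and $\mu\ge\gc-\xi$. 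Since $\|\cdot\|_{\rm TV}$ is half the $\ell^1$ distance, the stated total variation bound $\xi/(\gc-\xi)$ drops out.

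There is no real technical obstacle. The only point that requires care is the sign bookkeeping in the last display: the $A$ and $-A$ terms must cancel to produce the factor $2B$, rather than yielding the cruder $A+2B\le 3\xi$ that a more naive triangulation (say through the intermediate distribution $\{\gc_i/\gc\}$) would give.
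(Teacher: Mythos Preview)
Your proof is correct and essentially identical to the paper's: both split $\mu_i/\mu-\nu_i/\nu$ into a ``numerator difference'' term and a ``normalization difference'' term, bound $\sum_i|\mu_i-\nu_i|$ by $(\gc-\mu)+(\gc-\nu)$ via the common ceiling $\gc_i$, and observe that the normalization difference cancels one of the two defects to leave $2(\gc-\mu)/\nu$ (the paper assumes $\nu\ge\mu$, you assume $\mu\ge\nu$, so the roles are swapped). Your explicit defect notation $A,B$ makes the cancellation $A+B+(B-A)=2B$ slightly more transparent than the paper's version, but the argument is the same.
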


\begin{proof}
Assuming (w.l.o.g.) that $\nu\geq \mu$, we have (with sums over $i\in
[n]$),
\begin{align*}
\sum\big| \frac{\mu_i}{\mu}-\frac{\nu_i}{\nu}\big| &\leq
\sum\mu_i\left(\frac1\mu-\frac1\nu\right) +
\frac{1}{\nu}\sum |\mu_i-\nu_i| \\
& \leq
\frac{\nu-\mu}{\nu} +\frac{1}{\nu}\sum ((\gc_i-\mu_i)+(\gc_i-\nu_i))\\
& = 2\frac{\gc-\mu}{\nu} ~\leq ~ 2\frac{\xi}{\gc-\xi}.
\end{align*}
\end{proof}

\begin{prop}\label{PROP:NUGZ}
The conditions on $H$ in Theorem \ref{THM:HOM-CONV} imply that
(\ref{nugz}) holds with $\kappa=2Dc(\overline{H})$.
\end{prop}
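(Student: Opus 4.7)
The plan is to apply Proposition~\ref{TVD} after carefully choosing the auxiliary weights $\gamma_i$ to factor out the coordinates that $\zeta$ and $\zeta'$ share. Writing $\zeta=(\zeta_1,\dots,\zeta_s)$ and $\zeta'=(\zeta_1,\dots,\zeta_{s-1},\zeta'_s)$, I would set
\[
\gamma_i \;=\; \alpha_i\prod_{j=1}^{s-1}\beta_{i,\zeta_j},\qquad \mu_i \;=\; \gamma_i\,\beta_{i,\zeta_s},\qquad \nu_i \;=\; \gamma_i\,\beta_{i,\zeta'_s}.
\]
Since all edgeweights lie in $[0,1]$ we have $\gamma_i\ge\mu_i,\nu_i\ge 0$, and the probability vectors $\nu_\zeta$ and $\nu_{\zeta'}$ coincide with $\{\mu_i/\mu\}$ and $\{\nu_i/\nu\}$ respectively, so Proposition~\ref{TVD} is directly applicable once we find a suitable $\xi$ and lower-bound $\gamma=\sum_i\gamma_i$.

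The first step is to bound the deficits. Using $\gamma_i\le\alpha_i$ together with $\alpha_H=1$ and the definition of $c(\overline{H})$,
\[
\gamma-\mu \;=\; \sum_i \gamma_i(1-\beta_{i,\zeta_s}) \;\le\; \sum_i \alpha_i(1-\beta_{i,\zeta_s}) \;\le\; c(\overline{H}),
\]
and analogously $\gamma-\nu\le c(\overline{H})$, so $\xi=c(\overline{H})$ is a valid choice. The second step is to lower-bound $\gamma$ itself by means of the elementary Weierstrass/Bernoulli inequality $\prod_{j=1}^{s-1}x_j\ge 1-\sum_{j=1}^{s-1}(1-x_j)$ for $x_j\in[0,1]$, which gives
\[
\gamma \;\ge\; 1 - \sum_{j=1}^{s-1}\sum_i \alpha_i(1-\beta_{i,\zeta_j}) \;\ge\; 1-(s-1)c(\overline{H}) \;\ge\; 1-(D-1)c(\overline{H}).
\]

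Combining these yields $\gamma-\xi\ge 1-Dc(\overline{H})>1/2$ by the hypothesis $c(\overline{H})<1/(2D)$, and Proposition~\ref{TVD} then gives
\[
\|\nu_{\zeta}-\nu_{\zeta'}\|_{_{{\rm TV}}} \;\le\; \frac{\xi}{\gamma-\xi} \;\le\; \frac{c(\overline{H})}{1-Dc(\overline{H})} \;<\; 2c(\overline{H}) \;=\; \frac{\kappa}{D},
\]
which is exactly \eqref{nugz}. There is no real obstacle here; the only delicate point is the choice of $\gamma_i$. A naive choice such as $\gamma_i=\alpha_i$ would bundle all $s$ factors into the deficit and force $\xi\approx Dc(\overline{H})$, which is too weak. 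Factoring out only the $s-1$ shared terms keeps the deficit at $c(\overline{H})$ while costing at most $(D-1)c(\overline{H})$ in the lower bound on $\gamma$, and this trade-off is exactly tight enough for the threshold $c(\overline{H})<1/(2D)$.
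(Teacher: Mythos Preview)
Your proof is correct and follows essentially the same approach as the paper: the same choice of $\gamma_i=\alpha_i\prod_{j=1}^{s-1}\beta_{i,\zeta_j}$, the same deficit bound $\xi=c(\overline{H})$ via $\gamma_i\le\alpha_i$, the same lower bound $\gamma\ge 1-(D-1)c(\overline{H})$ via the Weierstrass inequality, and the same appeal to Proposition~\ref{TVD}. Your version actually spells out the final arithmetic $\xi/(\gamma-\xi)\le c(\overline{H})/(1-Dc(\overline{H}))<2c(\overline{H})=\kappa/D$ that the paper leaves implicit.
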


\begin{proof}
Suppose $H$ is as in Theorem \ref{THM:HOM-CONV} and let $\gz,\gz'$ be
as in Theorem \ref{DU}. For $i\in V(H)$, let $ \gc_i
=\ga_i\prod_{j=1}^{s-1}\gb_{i,\gz_j}$, $\mu_i = \gc_i\gb_{i,\gz_s}$
and $\nu_i = \gc_i\gb_{i,\gz_s'}$, and set $\gc =\sum\gc_i$. Then
$\mu_i,\nu_i\leq \gc_i$ and (using the inequality $\prod \eta_i\geq
1-\sum (1-\eta_i)$ for $\eta_i\in [0,1]$) we have
\begin{equation}\label{gcsum}
\sum_i\gc_i \geq \sum_i \ga_i[1-\sum_{j=1}^{s-1}(1-\gb_{i,\gz_j})] =
1-\sum_{j=1}^{s-1}\sum_i\ga_i(1-\gb_{i,\gz_j})\geq
1-(D-1)c(\overline{H}),
\end{equation}
and
\begin{equation}\label{musum}
\sum_i\mu_i = \sum_i \gc_i[1-(1-\gb_{i,\gz_s})] \geq \gc -
\sum_i\ga_i(1-\gb_{i,\gz_s}) \geq \gc - c(\overline{H}),
\end{equation}
and similarly $\sum \nu_i\geq \gc -c(\overline{H})$; so Proposition
\ref{TVD} shows that \eqref{nugz} holds as claimed.
\end{proof}

\begin{proof*}{Theorem \ref{THM:HOM-CONV}}
Our approach here via (\ref{logtG}) is similar to that of \cite{BG},
which in turn was inspired by the ``cavity" method of statistical
physics; see e.g. \cite{MP}.

Given an ordering $v_1,\dots ,v_n$ (with $n=|G|$) of $V(G)$, set
$G_k=G-\{v_1,\dots,v_k\}$. We have
\[
t(G_k)~=\sum_{\phi:V(G_k)\ra H}W(\phi)
\]
and may write
\[
t(G_{k-1})= \sum_{\phi:V(G_k)\ra H}W(\phi) \sum_{i\in
V(H)}\alpha_i\prod_{w\in N_{G_{k-1}}(v_k)} \beta_{i,\phi(w)}.
\]
Thus
\[
\frac{t(G_{k-1})}{t(G_k)} = \E_{G_k}\sum_{i\in
V(H)}\alpha_i\prod_{w\in N_{G_{k-1}}(v_k)}\beta_{i,\phi(w)},
\]
and
\begin{equation}\label{logtG}
\ln t(G) = \sum_{k=1}^n \ln \E_{G_k}\sum_{i\in
V(H)}\alpha_i\prod_{w\in N_{G_{k-1}}(v_k)}\beta_{i,\phi(w)}.
\end{equation}
We will use Theorem \ref{DU} to say that for large $r$ the
expectation in (\ref{logtG}) is nearly determined by the
$r$-neighborhood of $v_k$ in $G_{k-1}$. To say this properly set, for
a graph $K$ and $v\in V(K)$,
\begin{equation}\label{PsiK}
\Psi_K(v) = \E_{K-v}\sum_{i\in V(H)}\ga_i\prod_{w\in
N_K(v)}\gb_{i,\phi(w)}.
\end{equation}
We note right away that
\begin{equation}\label{PSI-BOUND}
\frac12 < \sum_{i\in V(H)}\ga_i\prod_{w\in N_K(v)}\gb_{i,\phi(w)} \le
1, \qquad\text{and so} \qquad \frac12 < \Psi_K(v) \le 1.
\end{equation}
The upper bound is trivial, while the lower bound follows from a
computation similar to \eqref{gcsum}:
\begin{align*}
\sum_{i\in V(H)}\ga_i\prod_{w\in N_K(v)}\gb_{i,\phi(w)} &\ge
\sum_{i\in V(H)}\alpha_i\Bigl(1-\sum_{w\in N_K(v)}
(1-\beta_{i,\phi(w)})\Bigr)\nonumber\\
&= 1- \sum_{w\in N_K(v)} \sum_{i\in V(H)} \alpha_i
(1-\beta_{i,\phi(w)}) \ge 1-Dc(\overline{H})>\frac12.
\end{align*}

The assertion is then that for $K$ of maximum degree at most $D$,
$\Psi_K(v)$ is determined to within $o_r(1)$ by (the isomorphism type
of) $B_K(v,r)$ (where $o_r(1)\ra 0$ as $r\ra \infty$); that is:

\begin{lemma}\label{KK'L}
For any $K,K'$ of maximum degree at most $D$, $v\in V(K)$ and $v'\in
V(K')$ with $B_{K'}(v',r)\cong B_K(v,r)$, we have

\mn {\rm (a)} $|\Psi_K(v)-\Psi_{K'}(v')|< D\kappa^r$,

\mn {\rm (b)} $|\ln\Psi_K(v)-\ln \Psi_{K'}(v')|< 2D\kappa^r$.
\end{lemma}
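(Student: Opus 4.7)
My plan is to deduce (b) from (a) by an elementary estimate, and to prove (a) by coupling $\phi\sim\Pr_{K-v}$ and $\phi'\sim\Pr_{K'-v'}$ via Theorem \ref{DU} so that they agree on the identified neighborhood $N_K(v)\cong N_{K'}(v')$ with high probability.

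For (b) from (a): inequality \eqref{PSI-BOUND} gives $\Psi_K(v),\Psi_{K'}(v')\in(1/2,1]$, so the mean value theorem applied to $\ln$ yields
\[
|\ln\Psi_K(v)-\ln\Psi_{K'}(v')|\le \frac{|\Psi_K(v)-\Psi_{K'}(v')|}{\min(\Psi_K(v),\Psi_{K'}(v'))} < 2|\Psi_K(v)-\Psi_{K'}(v')|.
\]

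For (a), observe that $\Psi_K(v)=\E_{K-v}[f(\phi)]$ with $f(\phi)=\sum_{i\in V(H)}\alpha_i\prod_{w\in N_K(v)}\beta_{i,\phi(w)}\in[0,1]$ depending only on $\phi|_{N_K(v)}$; after identifying $N_K(v)$ with $N_{K'}(v')$ via the $r$-ball isomorphism, the same $f$ represents $\Psi_{K'}(v')$. Hence $|\Psi_K(v)-\Psi_{K'}(v')|$ is bounded by the total variation distance between the induced marginals on $N_K(v)$, which I bound by constructing a coupling. Set $\Lambda=\{u:1\le d_K(u,v)\le r-1\}$ and $\partial=\{u:d_K(u,v)=r\}$; then $\partial$ separates $\Lambda$ from the remainder of $K-v$, and the $r$-ball isomorphism identifies the induced subgraph on $\Lambda\cup\partial$ (with every internal edge) in $K$ with its counterpart in $K'$. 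Sample $\phi$ and $\phi'$ in two stages: first, independently sample the restrictions of $\phi$ and $\phi'$ to the complements of $\Lambda$ from their respective marginals, thereby fixing $\eta=\phi|_\partial$ and $\eta'=\phi'|_{\partial^*}$. By the Markov property, the conditional law of $\phi|_\Lambda$ given $\eta$ and of $\phi'|_\Lambda$ given $\eta'$ are governed by the same Gibbs specification on $\Lambda\cup\partial$, differing only in the boundary values. Now apply Theorem \ref{DU} to this common specification (whose hypothesis holds with $\kappa=2Dc(\overline H)<1$ by Proposition \ref{PROP:NUGZ}) to produce a coupling with $\Pr(\phi(x)\ne\phi'(x))\le\kappa^{d(x,\partial)}$ for every $x\in\Lambda$. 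For $x\in N_K(v)$ the triangle inequality gives $d(x,\partial)\ge r-1$, so a union bound over the at most $D$ neighbors of $v$ yields $\Pr(\phi|_{N_K(v)}\ne\phi'|_{N_{K'}(v')})\le D\kappa^{r-1}$, which bounds the total variation distance between the $N_K(v)$-marginals and gives the desired estimate.

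The main obstacle is the Markov identification: one must verify that the conditional law of $\phi|_\Lambda$ given $\phi|_\partial$ is literally the same Gibbs specification in both graphs, which requires $\partial$ to be a genuine separator and requires the $r$-ball isomorphism to specify every edge within $\Lambda\cup\partial$, including the edges from the $(r-1)$-sphere to the $r$-sphere. This is precisely why $\Lambda$ cannot be enlarged to the full $r$-ball: the edges leaving the $r$-sphere are not recorded by $B_K(v,r)$, and it is this that fixes the exponent in the final bound.
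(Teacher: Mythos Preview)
Your argument follows the paper's line exactly---couple the two Gibbs measures via Theorem~\ref{DU} so that they agree on $N_K(v)$ with high probability, then use the lower bound in \eqref{PSI-BOUND} to pass from (a) to (b). The only discrepancy is the exponent: with your choice $\Lambda=\{u:1\le d_K(u,v)\le r-1\}$ and boundary $\partial$ equal to the $r$-sphere, a neighbor $w\in N_K(v)$ satisfies $d(w,\partial)\ge r-1$, so your coupling gives $\Pr(\phi|_{N_K(v)}\neq\phi'|_{N_{K'}(v')})\le D\kappa^{r-1}$, not $D\kappa^{r}$. Since $\kappa<1$ this is a \emph{weaker} bound than the one claimed, so your argument does not quite prove the lemma as stated (though it is more than enough for the use made of the lemma in Theorem~\ref{THM:HOM-CONV}, where only $o_r(1)$ is needed).

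The paper recovers the exponent $r$ by taking $\Lambda$ to be all of $B_K(v,r)\setminus\{v\}$ and $\Lambda'$ to be the set of vertices at distance $>r$ from $v$, so that $d(w,\Lambda')\ge r$ for $w\in N_K(v)$. Your worry---that the edges leaving the $r$-sphere are not recorded by $B_K(v,r)$, so the conditional specifications on this larger $\Lambda$ are not literally identical in $K$ and $K'$---is legitimate if Theorem~\ref{DU} is read as a statement about a single ambient graph. The paper is tacitly using the natural extension: the recursive Dobrushin coupling that proves Theorem~\ref{DU} only uses that every single-site conditional $\nu_\zeta$ satisfies \eqref{nugz}, and this remains true at $r$-sphere vertices regardless of how many (or which) external neighbors they have in $K$ versus $K'$. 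Treating those unknown external neighbors as part of the boundary data (rather than conditioning away the whole $r$-sphere, as you do) is what buys the extra factor of $\kappa$.
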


\begin{proof}
(a) The sum in (\ref{PsiK}) is a function of the multiset $M(v,\phi)
= \{\phi(w):w\in N_K(v)\}$. By Theorem \ref{DU} there is a coupling
$(\tilde{\phi},\tilde{\phi}')$ of $\phi$ and $\phi'$ chosen according
to $\Pr_{K-v}$ and $\Pr_{K'-v'}$ so that $\Pr(M(v,\tilde{\phi})\neq
M(v',\tilde{\phi}'))\leq |N_K(v)|\kappa^r$. With this coupling, using
the upper bound in \eqref{PSI-BOUND},
\[
|\Psi_K(v)-\Psi_{K'}(v')|\leq  \Pr(M(v,\tilde{\phi})\neq
M(v',\tilde{\phi}')) \le D\kappa^r.
\]

\mn (b)  This is implied by (a) once we observe that $\Psi_K(v)$ is
bounded below by \eqref{PSI-BOUND}.
\end{proof}

Returning to the proof of Theorem \ref{THM:HOM-CONV}, it's convenient
to speak of an ordering $\gs$ of $V(G)$, thought of as a bijection
from $V(G)$ to $[n]$ (again with $n=|V(G)|$). For such a $\gs$ and
$v\in V(G)$, set $G(v,\gs) = G[\{w\in V(G):\gs(w) \geq \gs(v)\}]$.
Then with $\gs$ a random (uniform) permutation of $V(G)$,
\eqref{logtG} gives
\begin{equation}\label{Egs}
\ln t(G) = \sum_{v\in V(G)}\E_{\gs}\ln\Psi_{G(v,\gs)}(v).
\end{equation}
By Lemma \ref{KK'L} the contribution of $v$ to (\ref{Egs}) is
determined up to $o_r(1)$ by $B_G(v,r)$. Precisely, let $U=B_G(v,r)$
and $U_\sigma=B_{G(v,\gs)}(v,r)$, then
\begin{equation}\label{vcontrib}
\E_{\gs}\ln\Psi_{G(v,\gs)}(v) =  X_U+R,
\end{equation}
where $X_U=\E_{\gs} \ln \Psi_{U_\sigma}(v)$ depends on the ball
$U=B_G(v,r)$ only, and $|R|<2D\kappa^r$. By \eqref{PSI-BOUND}, we
have $|X_U|<1$.

Thus $|G|^{-1}\ln t(G)= \sum \mu(G,U)X_U+o_r(1) $ (with the sum over
$r$-balls $U$) and
\begin{equation}\label{last}
\big||G_m|^{-1}\ln t(G_m)-|G_n|^{-1}\ln t(G_n)\big| < \sum
|\mu(G_m,U)-\mu(G_n,U)| +o_r(1).
\end{equation}
Finally, the right hand side of (\ref{last}) can be made as small as
desired by choosing a sufficiently large $r$ and then $m,n$ large
enough to make the sum small.
\end{proof*}

\begin{remark}\label{1}
Of course the condition on $H$ in Theorem \ref{THM:HOM-CONV} can be
replaced by any assumption that supports the conclusions of Lemma
\ref{KK'L} (with some $o_r(1)$ in place of the explicit bounds given
there). One notable example involves the {\em hard-core model}, in
which $V(H)=\{0,1\}$ and the weights are $\ga_0=1/(1+\gl)$, $\ga_1
=\gl/(1+\gl)$, $\gb_{0,1}=\gb_{0,0}=1$ and $\gb_{1,1} = 0$.  Here the
present results combined with \cite{Weitz} give the convergence in
Theorem \ref{THM:HOM-CONV} provided $\gl\leq
(D-1)^{D-1}/(D-2)^D\approx e/D$ (whereas Theorem \ref{DU} gives this
for $\gl< 1/D$).

Another very interesting example is that of counting $q$-colorings;
thus $H$ is the complete graph on $[q]$ (without weights, though to
put it in the above framework we should replace $\ga_i=1$ by
$\ga_i=1/q~\forall i$). Here Theorem \ref{DU} gives convergence for
$q>2D$, but it seems reasonable to expect that $q\geq D+1$ suffices.
That this is at least true for large girth (that is, if we add the
requirement that the girth of $G_n$ tends to infinity), follows from
the present arguments with Theorem \ref{DU} replaced by a result of
Jonasson \cite{Jonasson} which says (informally) that for a uniform
$q$-coloring of an $r$-branching tree with $q\geq r+2$, the color of
the root becomes nearly independent of the colors of the leaves as
the depth of the tree grows. (We actually need this for trees in
which each internal node has {\em at most} $r$ children, but this
version is easily seen to follow from the original.)

If we assume, in addition to large girth, that the $G_n$ are $D$-{\em
regular}, then we have (again for $q\geq D+1$) the explicit limit
\begin{equation}\label{reggirth}
\frac{\ln \hom (G_n,H)}{|G_n|}\ra \ln q + \frac{D}2\ln(1-\frac1q).
\end{equation}
This is one of the main results of \cite{BG}, obtained there by
combining the cavity method with a ``rewiring" device (another idea
from statistical physics \cite{MP}), used to maintain regularity.
Here we have the result more easily:  it follows from the observation
that Johansson's theorem (which is also needed in \cite{BG}) implies
that the expectations in (\ref{Egs}) tend to $(D/2)\ln(1-1/q)$ as the
girth grows; namely, it implies that for each $i\in [q]$ the events
$\{\gs(w)>\gs(v) ~\mbox{and} ~\phi(w) =i\}$ ($w\in N(v)$) are, for
large girth, nearly independent, each with probability about $1/q$.
(The key difference between the present argument and that of
\cite{BG} is the use of the random ordering $\gs$.)
\end{remark}

\begin{remark}\label{2}
An argument similar to the one above gives (for a left-convergent
sequence $\{G_n\}$) convergence of $\{|G_n|^{-1}\ent(\phi_{G_n})\}$,
where $\ent$ is (say binary) entropy and $\phi_{G_n}:V(G_n)\ra H$ is
chosen according to $\Pr_{G_n}$.  Here we should replace
(\ref{logtG}) by the ``chain rule" expansion $ \ent(\phi) =
\sum_v\ent(\phi(v)|(\phi(w):\gs(w)>\gs(v))). $ Getting to the
analogue of (\ref{vcontrib}) now requires an extra step:  we should
choose $r_1$ so that for any $\gs$ the law of $\phi(v)$ given
$(\phi(w):\gs(w)>\gs(v))$ is ``nearly determined" by
$(\phi(w):\gs(w)>\gs(v), w\in B(v,r_1))$,
and then $r$ ($>r_1$) so that the law of the latter vector is nearly
unaffected by the values taken by $\phi$ outside $B(v,r)$.
\end{remark}

\subsection{Proof via Mayer expansion}

Our second proof relies on techniques which are well know in the
mathematical statistical physics literature. To apply these
techniques, we express $t(G,H)$ as the partition function of a so
called abstract polymer system, express its logarithm in terms of an
infinite series whose terms can be written down explicitly, and
finally prove that for $c(\overline{H})< 1/(8D)$, the series for
$\frac 1{|G|}\ln t(G,H)$ is absolutely convergent uniformly in $|G|$.
This will allow us to take the limit in Theorem~\ref{THM:HOM-CONV}
term by term.

\subsubsection{Stable sets, Mayer expansion, and Dobrushin's lemma}

We start with some preliminaries from mathematical physics,
reformulated here in a more combinatorial language. Let $G$ be a
graph and let $\II(G)$ denote the set of stable (independent) subsets
of $V(G)$. We assign a variable $x_i$ to each node $i$, and define
the {\it multivariate stable set polynomial} as
\[
\stab(G,\mathbf{x}) = \sum_{S\in\II(G)} \prod_{i\in S} x_i.
\]
Note that $\stab(G,1,\dots,1)=\hom(G,H)$, where $H$ is the graph on
two adjacent nodes, with a loop at one of them (all weights being
$1$).

In the language of mathematical physics, the pair $(G,\mathbf{x})$ is
called an {\it abstract polymer system}, and $\stab(G,\mathbf{x})$ is
called the {\it partition function} of the abstract polymer system
$(G,\mathbf{x})$ (see, e.g., \cite{Seiler}, where the notion of an
abstract polymer system was first introduced). Here we will be
interested in the Taylor expansion of $\ln\stab(G,\mathbf{x})$ about
$\mathbf{x}=(0,\dots,0)$, known under the name of Mayer expansion in
statistical physics.  In a slightly less general context than the one
considered here, this expansion was first derived in  Malyshev
\cite{Mal79}, who in turn relied heavily on the work of Rota
\cite{Rota}.  In the general context of an abstract polymer system,
it goes back to \cite{Seiler}.

\killtext{ For $a\in \Z_+^V$, let $a!=a_1!\cdots a_m!$. Let $G[a]$
denote the graph obtained by replacing each node $i$ of $V$ by $a_i$
twin copies and adding an edge between any pair of twins. For
$\mathbf{x}=(x_i:~i\in V)$, we define $\mathbf{x}^b=x_1^{a_1}\cdots
x_m^{a_m}$. For $I\subseteq V$, we set $\mathbf{x}^I=\prod_{i\in I}
x_i$.}

For a sequence
$v\in V^m$ of nodes of a simple graph $G$, let $G[v]$ denote the
graph on $[m]$ in which $i$ and $j$ are adjacent if and only if $v_i$
and $v_j$ are equal or adjacent in $G$. (Note that $v$ may contain
repetitions.) The following lemma is a reformulation of a result of
Seiler \cite{Seiler}.

\begin{lemma}\label{COR:LNST-A}
Let $G=(V,E)$ be a simple graph. For every $\mathbf{x}\in\R^V$ such
that the series below is absolutely convergent, we have
\begin{equation}\label{EQ:LNST-A}
\ln\stab(G,\mathbf{x}) = \sum_{m=1}^\infty\frac1{m!} \sum_{v\in V^m}
\csp(G[v]) \prod_{i=1}^m x_{v_i},
\end{equation}
\end{lemma}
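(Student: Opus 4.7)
The plan is to reduce the identity to the classical exponential (``linked cluster'') formula for labelled combinatorial structures. First I would reformulate the stable-set polynomial as a weighted sum over sequences: define a hard-core Boltzmann factor by setting $f(u,w)=0$ if $u=w$ or $uw\in E(G)$, and $f(u,w)=1$ otherwise. A sequence $v\in V^m$ contributes to $\prod_{i<j}f(v_i,v_j)$ only when it enumerates an independent set without repetitions, and each independent set of size $m$ is enumerated by exactly $m!$ such sequences, so
\[
\stab(G,\mathbf{x})=\sum_{m\ge 0}\frac{1}{m!}\sum_{v\in V^m}\Bigl(\prod_{i=1}^m x_{v_i}\Bigr)\prod_{1\le i<j\le m}f(v_i,v_j).
\]

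Next I would write $f(u,w)=1+g(u,w)$ with $g=-1$ on incompatible pairs and $0$ elsewhere, and expand the pair product. Since $g(v_i,v_j)$ vanishes unless $(i,j)$ is an edge of $G[v]$,
\[
\prod_{1\le i<j\le m}f(v_i,v_j)=\sum_{E'\subseteq E(G[v])}(-1)^{|E'|}.
\]
Now I would invoke the exponential formula for labelled structures: the pair $(v,E')$ decomposes canonically along the connected components of $([m],E')$, and the weight $\prod_i x_{v_i}\cdot(-1)^{|E'|}$ factorises over those components, so as formal power series in $\mathbf{x}$,
\[
\stab(G,\mathbf{x})=\exp\Biggl(\sum_{m\ge 1}\frac{1}{m!}\sum_{v\in V^m}\Bigl(\prod_{i=1}^m x_{v_i}\Bigr)\sum_{\substack{E'\subseteq E(G[v])\\ ([m],E')\text{ connected}}}(-1)^{|E'|}\Biggr).
\]
By definition \eqref{EQ:PSI-SUM} of the Crapo invariant the innermost sum is precisely $\csp(G[v])$, so taking logarithms gives the claimed formula as an identity of formal power series in $\mathbf{x}$.

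Finally, to upgrade this to the analytic statement I would observe that $\stab(G,\mathbf{0})=1$, so both sides vanish at the origin and are real-analytic on any open set where the right-hand series converges absolutely; since their formal Taylor expansions at $\mathbf{0}$ agree, they coincide throughout any such set. The main obstacle is the exponential-formula step itself, since the constraint $E'\subseteq E(G[v])$ couples the combinatorial structure on $[m]$ to the sequence $v$; one can handle this either via the species/linked-cluster formalism, or by Möbius inversion on the partition lattice applied to the coefficient of $\prod_i x_i^{a_i}$ on each side. The latter is essentially the form in which the identity is derived by Seiler \cite{Seiler}.
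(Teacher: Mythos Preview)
Your argument is correct and is the standard Mayer (linked-cluster) derivation of the polymer expansion. The paper does not actually prove this lemma: it is introduced as ``a reformulation of a result of Seiler \cite{Seiler}'' and simply cited, so there is no paper proof to compare against; your sketch is essentially the argument one finds in Seiler and the related literature (e.g.\ \cite{Mal79,ScSok}).

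One small point worth tightening in the last step: you assert that $\ln\stab(G,\mathbf{x})$ is real-analytic on the region of absolute convergence of the right-hand side, but this presupposes $\stab(G,\mathbf{x})\neq 0$ there, which is not given a priori. The clean way around this is to first exponentiate: letting $F(\mathbf{x})$ denote the right-hand series, you have $e^{F}=\stab$ as formal power series at $\mathbf{0}$, and since $\stab$ is a polynomial while $e^{F}$ is analytic on the (connected) domain of absolute convergence, the identity theorem gives $e^{F}=\stab$ throughout; hence $\stab\neq 0$ there and the logarithm is legitimate.
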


To prove absolute convergence of the expansion in \eqref{EQ:LNST-A},
we use the following lemma which goes back to Dobrushin.  In the form
stated here, it can be found, e.g., in \cite{Bo}.

\begin{lemma}
\label{thm:Dobrushin}
Let $G=(V,E)$ be a simple graph, and let $\mathbf{x}\in\R^V$ and
$\mathbf{b}\in [0,\infty)^V$ be such
\begin{equation}
\label{AP.Conv-cond} \sum_{ j\in V\atop {ij\in E {~\rm or~} j=i}}
\ln\Bigl(1+|x_j|e^{b_j}\Bigr)\leq b_i.
\end{equation}
for all $i\in V$. Then the series in \eqref{EQ:LNST-A} is absolutely
convergent, and
\begin{equation}
\label{AP.logZ-Dbd} \bigl|\ln\stab(G,\mathbf{x})\bigr| \leq
\sum_{i\in V} \ln(1+|x_i|e^{b_i}).
\end{equation}
\end{lemma}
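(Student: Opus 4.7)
Plan. This is the standard Kotecky--Preiss / Dobrushin cluster-expansion bound for the abstract polymer model applied to the independent-set polynomial; the proof strategy is to combine the tree-graph inequality of Lemma~\ref{LEM:SP-TREE} with a tree recursion closed by the hypothesis \eqref{AP.Conv-cond}.

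First, I would apply Lemma~\ref{LEM:SP-TREE} term-by-term to the Mayer series \eqref{EQ:LNST-A}, dominating each summand in absolute value by the corresponding expression with $|\csp(G[v])|$ replaced by the spanning-tree count $\tree(G[v])$. Expanding $\tree(G[v])$ as a sum over spanning trees $T$ of $G[v]$ and swapping sums recasts the majorant as a sum over pairs $(v, T)$, where $T$ is a labeled tree on $[m]$ and $v : [m] \to V$ is a compatible decoration (meaning each edge $\{a,b\}$ of $T$ has $v_a = v_b$ or $v_a v_b \in E(G)$).

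Next, I would root each $T$ at its first vertex and group by the root image $i = v_1 \in V$. A Cayley/exponential-formula calculation reorganizes the sum as $\sum_i |x_i|\,\tau_i(\mathbf{x})$, with rooted generating functions $\tau_i$ satisfying a fixed-point inequality. By carrying one additional level of bookkeeping in the recursion---separating the ``loop'' case $v_a = v_b$ from the ``edge'' case and absorbing the resulting geometric series into factors of the form $(1 + |x_j|\tau_j)$---I would obtain
\[
\ln \tau_i \le \sum_{j : j = i \text{ or } ij \in E} \ln(1+|x_j|\tau_j).
\]
Substituting the trial solution $\tau_j = e^{b_j}$ on the right and invoking hypothesis \eqref{AP.Conv-cond} gives $\ln\tau_i \le b_i$, and a monotone iteration (equivalently, induction on $|V|$) then confirms $\tau_i \le e^{b_i}$. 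Summing the contributions over $i$ yields \eqref{AP.logZ-Dbd} and, a fortiori, absolute convergence of \eqref{EQ:LNST-A}.

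The main obstacle is the second step: extracting the sharp $\ln(1+\cdot)$ factors rather than the cruder $|x_j|e^{b_j}$ estimate that a naive tree unfolding would produce. Handling repeated visits and loops carefully is precisely what makes the recursion close against \eqref{AP.Conv-cond} in its $\ln(1+\cdot)$ form and yields the statement \eqref{AP.logZ-Dbd} exactly rather than an unnecessarily weaker bound.
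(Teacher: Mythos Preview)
The paper does not prove this lemma at all: it is stated with the remark that ``In the form stated here, it can be found, e.g., in \cite{Bo}'' and then used as a black box. So there is no paper proof to compare against; your proposal is essentially supplying what the paper imports from the literature.

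Your outline is the standard route and is sound in its first half. Bounding $|\csp(G[v])|$ by $\tree(G[v])$ via Lemma~\ref{LEM:SP-TREE}, expanding over spanning trees, and rooting to obtain a recursion for functions $\tau_i$ is exactly the Koteck\'y--Preiss mechanism. Where your sketch is thin is precisely the point you flag: extracting the sharp $\ln(1+\cdot)$ form of the hypothesis rather than the cruder Koteck\'y--Preiss condition $\sum_j |x_j|e^{b_j}\le b_i$. The naive tree recursion (each child contributes a factor summed over all compatible images, then exponentiated via the number-of-children count) closes against the weaker condition, not \eqref{AP.Conv-cond}. Your remark about ``absorbing the resulting geometric series into factors of the form $(1+|x_j|\tau_j)$'' gestures at the right idea but is not yet a proof: one has to explain why the tree sum, after grouping children by image, factors through products of $(1+|x_j|\tau_j)$ terms rather than $\exp(|x_j|\tau_j)$ terms. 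The cleanest way to get the Dobrushin form \eqref{AP.Conv-cond} is usually not the tree expansion at all but direct induction on $|V|$, deleting one vertex at a time and using the multiplicativity of $\stab$ over components; this is the argument in \cite{Bo} and in Dobrushin's original treatment, and it produces the $\ln(1+\cdot)$ factors transparently. If you stay with the tree route, you should spell out the combinatorial identity that converts the rooted-tree sum into a product of such factors, or else you will end up proving only the weaker Koteck\'y--Preiss statement.
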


\subsubsection{Mayer expansion for $\ln t(G,H)$}

We can rewrite $\hom(G,H)$ in terms of the intersection graph
$\GG=L(\ConIn(G))$ of connected induced subgraphs.

\begin{lemma}\label{LEM:HOM-STAB}
For every simple graph $G$ and weighted graph $H$, define a vector
$z\in\R^{\ConIn(G)}$ by $z_F=z(F,\overline{H})$. Then $t(G,H) =
\stab(\GG,z)$.
\end{lemma}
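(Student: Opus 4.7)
The plan is to expand $t(G,H)$ using the probabilistic formula $t(G,H)=\E_\phi\prod_{ij\in E(G)}\beta_{\phi(i)\phi(j)}$, rewrite each factor as $\beta_{ij}=1-\bar\beta_{ij}$, where $\bar\beta_{ij}=1-\beta_{ij}$ is the corresponding edgeweight of $\overline H$, and expand the product over edges. We may assume $\alpha_H=1$, so that $t(F,\overline H)$ is unaffected by isolated vertices of $F$. This gives
\[
t(G,H) \;=\; \sum_{F\subseteq E(G)}(-1)^{|F|}\,\E_\phi\!\!\prod_{ij\in F}\bar\beta_{\phi(i)\phi(j)} \;=\; \sum_{F}(-1)^{|E(F)|}\,t(F,\overline H),
\]
where $F$ ranges over all subgraphs of $G$ (empty $F$ contributes $1$).

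Next, I group the sum on the right by the partition of $V(F)$ into the vertex sets of the connected components of $F$. A connected component with vertex set $S\subseteq V(G)$ has $|S|\ge 2$ (since $F$ has no isolated vertices) and is a connected spanning subgraph of $G[S]$, which forces $G[S]\in\ConIn(G)$ and the component to lie in $\CSpan(G[S])$. Because $\phi$ is chosen independently at each vertex and the component vertex sets are pairwise disjoint, $t(F,\overline H)$ factors as a product of $t(C,\overline H)$ over components $C$. Summing the signs $(-1)^{|E(C)|}$ over $C\in\CSpan(G[S])$ collapses to $z(G[S],\overline H)$ by the definition \eqref{zGH}. This yields
\[
t(G,H) \;=\; \sum_{\substack{\{S_1,\dots,S_k\}\\ S_j\text{ pairwise disjoint},\;G[S_j]\in\ConIn(G)}} \;\prod_{j=1}^k z(G[S_j],\overline H),
\]
where the $k=0$ term gives $1$.

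Finally, I identify this with $\stab(\GG,z)$. An element $F\in\ConIn(G)$ is determined by its vertex set via $F=G[V(F)]$, and two such elements are adjacent in $\GG=L(\ConIn(G))$ iff their vertex sets intersect. Hence the families $\{S_1,\dots,S_k\}$ above correspond bijectively, via $S_j\mapsto G[S_j]$, to the stable (independent) sets of $\GG$. Reading off the definition of $\stab$ with $z_F=z(F,\overline H)$ then gives $t(G,H)=\stab(\GG,z)$.

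The main thing to be careful about is the bookkeeping in the two reindexings (from edge-subgraphs of $G$ to unordered families of vertex sets, and from vertex sets to elements of $\ConIn(G)$); once one checks that no family is over- or under-counted and that single-vertex components are correctly excluded, the identity falls out of a straightforward inclusion–exclusion expansion.
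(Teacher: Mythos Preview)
Your proof is correct and follows essentially the same route as the paper's: both expand $t(G,H)$ via $\beta_{ij}=1-\bar\beta_{ij}$, factor the resulting $t(\,\cdot\,,\overline H)$ over connected components, and collapse the inner sums to $z(G[S],\overline H)$. The only cosmetic difference is that the paper indexes the outer sum by partitions of $V(G)$ (with singleton classes contributing $1$ and classes $Y$ with $G[Y]$ disconnected contributing $0$), whereas you pass directly to unordered families of pairwise disjoint vertex sets $S_j$ with $G[S_j]\in\ConIn(G)$, which is the same thing after discarding those trivial contributions.
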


\begin{proof}
By easy computation,
\begin{equation}\label{EQ:TGH-1}
t(G,H)=\sum_{E'\subseteq E} (-1)^{|E'|}t(G',\overline{H}),
\end{equation}
where $G'=(V(G),E')$. Using that $t(G',\overline{H})$ is
multiplicative over the components of $G'$ and that singleton
components give a factor of $1$, we get
\begin{equation}\label{tGH1}
t(G,H) =\sum_{E'\subseteq E}\prod_{F} (-1)^{|E(F)|}
t(F,\overline{H}),
\end{equation}
where the product extends over all non-singleton components of $G'$.
Collecting terms that induce the same partition, we get
\begin{align*}\label{}
t(G,H) &= \sum_{\PP\in\Pi(V)} \prod_{Y\in\PP}
\sum_{F\in\CSpan(G[Y])} (-1)^{|E(F)|} t(F, \overline{H})\\
&=\sum_{\PP\in \Pi(V)} \prod_{Y\in\PP}
z(G[Y],\overline{H})=\stab(\GG,z).
\end{align*}
\end{proof}

For any multiset $\{F_1,\dots,F_k\}$ of subgraphs, let
$L(F_1,\dots,F_k)$ denote the intersection graph of
$V(F_1),\dots,V(F_k)$. Combining Lemma \ref{LEM:HOM-STAB} and Lemma
\ref{COR:LNST-A}, we get the following formula.

\begin{corollary}\label{COR:EXP1}
Let $G$ be a simple graph and $H$, a weighted graph. If the series
below is absolute convergent, we have
\begin{equation}\label{EQ:LNT-FZ}
\ln t(G,H) = \sum_{m=1}^\infty \frac1{m!}\sum_{F_1,\dots,F_m\in
\ConIn(G)} \csp(L(F_1,\dots,F_m)) \prod_{j=1}^m z(F_j,\overline{H}).
\end{equation}
\end{corollary}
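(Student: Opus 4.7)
The plan is to deduce the corollary by direct substitution: apply Lemma \ref{COR:LNST-A} to the abstract polymer system $(\GG, z)$ provided by Lemma \ref{LEM:HOM-STAB}, and then translate the resulting identity back into the language of subgraphs of $G$. There is essentially no analytic or combinatorial content beyond these two ingredients; the only work is a notational sanity check.

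First I would record that, by Lemma \ref{LEM:HOM-STAB}, $t(G,H) = \stab(\GG, z)$, where $\GG = L(\ConIn(G))$ has vertex set $\ConIn(G)$ and $z_F = z(F, \overline{H})$. Under the hypothesis that the series in \eqref{EQ:LNT-FZ} is absolutely convergent, the corresponding series obtained by applying \eqref{EQ:LNST-A} to $(\GG, z)$ is absolutely convergent (they are literally the same series, after identifying the indexing as below), so Lemma \ref{COR:LNST-A} gives
\[
\ln t(G,H) \;=\; \ln\stab(\GG, z) \;=\; \sum_{m=1}^\infty \frac{1}{m!} \sum_{v \in V(\GG)^m} \csp(\GG[v]) \prod_{j=1}^m z_{v_j}.
\]

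Next I would rewrite the inner sum. A sequence $v \in V(\GG)^m$ is a tuple $(F_1,\dots,F_m) \in \ConIn(G)^m$, and $\prod_j z_{v_j} = \prod_j z(F_j,\overline{H})$. The one point that needs checking is the identification $\GG[v] = L(F_1,\dots,F_m)$. By the definition introduced just before Lemma \ref{COR:LNST-A}, $i$ and $j$ are adjacent in $\GG[v]$ iff $F_i = F_j$ or $F_iF_j \in E(\GG)$; since $\GG$ is the intersection graph of $\ConIn(G)$, the latter condition means $V(F_i)\cap V(F_j) \neq \emptyset$, and the case $F_i = F_j$ is trivially subsumed. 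This matches the adjacency rule defining $L(F_1,\dots,F_m)$, so substituting yields exactly \eqref{EQ:LNT-FZ}.

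The argument is entirely routine and I do not anticipate a real obstacle; the only subtlety is recognizing that the two different-looking conventions (``equal or adjacent'' in the definition of $\GG[v]$, versus ``nonempty intersection'' in the definition of $L(\cdot)$) agree in the present setting because equality of the induced subgraphs forces nontrivial vertex overlap.
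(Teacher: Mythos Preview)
Your argument is correct and is exactly the approach the paper takes: the corollary is stated there as an immediate consequence of combining Lemma~\ref{LEM:HOM-STAB} with Lemma~\ref{COR:LNST-A}, with no further proof given. Your explicit verification that $\GG[v] = L(F_1,\dots,F_m)$ (in particular, that the ``equal or adjacent'' clause in the definition of $\GG[v]$ coincides with the ``nonempty intersection'' clause defining $L(\cdot)$ because members of $\ConIn(G)$ have at least two vertices) is a useful sanity check that the paper leaves implicit.
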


Next we establish the convergence condition \eqref{AP.Conv-cond} for
$\mathbf b$ of the form $b_F=b|F|$.  For vectors $\mathbf b$ of this
form, it is clearly enough to prove that for all $i\in V$, we have
\begin{equation}
\label{Conv-cond} \sum_{{ F\in \ConIn(G):\atop i\in V(F)}}
\ln\Bigl(1+|z_F| e^{b|F|}\Bigr)\leq b.
\end{equation}
We in fact prove a stronger inequality. To state our result, we
define
\begin{equation}\label{Kb}
K= \frac{b+e^b}{\ln(1+be^{-b})}, \qquad \epsilon=-\ln(DK
c(\overline{H})) \qquad\text{and}\qquad
\tilde z_F=e^{\epsilon (|F|-1)}z_F.
\end{equation}
We will assume that $c(H)<1/(DK)$, so that $\eps>0$.

In the special case of colorings, i.e., the case where $H$ is the
complete graph without loops, the next lemma was already shown in
\cite{Bo}.

\begin{lemma}
\label{lem:conv-cond} For every simple graph $G$ with maximum degree
$D$, every weighted graph $H$ with edge weights in $[0,1]$, and every
node $i\in V(G)$, we have
\begin{equation}
\label{Conv-cond-1} \sum_{{ F\in \ConIn(G) \atop V(F)\ni i}}
|\tilde{z}_F| e^{b|F|} \leq b.
\end{equation}
\end{lemma}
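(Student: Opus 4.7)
My plan is to dominate $|\tilde z_F|$ via Lemma~\ref{LEM:ZGH1}, convert the sum over connected induced subgraphs $F$ into a sum over subtrees $T$ of $G$, bound the latter using Lemma~\ref{LEM:SUBTREECOUNT}(a), and then analyse the resulting series through the self-consistent equation for rooted subtrees of $T_D$. The main obstacle is recognising this last identity; everything else is bookkeeping, and the definition of $K$ is calibrated precisely so that the final inequality comes out to $b$.

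First, applying Lemma~\ref{LEM:ZGH1} to $\overline H$ (whose edge weights $1-\beta_{ij}(H)$ lie in $[0,1]$) gives $|z_F|\le\tree(F)\,c(\overline H)^{|F|-1}$. Since $e^\eps c(\overline H)=1/(DK)$ by the definition of $\eps$, setting $u:=e^b/(DK)$ yields
\[
|\tilde z_F|\,e^{b|F|}\;\le\; e^b\,\tree(F)\,u^{|F|-1}.
\]
The natural bijection---every subtree $T\subseteq G$ with $i\in V(T)$ and $|T|\ge 2$ corresponds uniquely to the pair $(F,T)$ with $F=G[V(T)]\in\ConIn(G)$ and $T$ a spanning tree of $F$---combined with Lemma~\ref{LEM:SUBTREECOUNT}(a) gives
\[
\sum_{\substack{F\in\ConIn(G)\\ V(F)\ni i}}|\tilde z_F|\,e^{b|F|}
\;\le\; e^b\sum_{k\ge 2}\#\{T\subseteq G:i\in V(T),\,|T|=k\}\,u^{k-1}
\;\le\; e^b\sum_{k\ge 2}\tfrac{1}{k}\binom{kD}{k-1}u^{k-1}.
\]

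Now let $\Psi(u)=\sum_{k\ge 1}\tfrac1k\binom{kD}{k-1}u^{k-1}$, the generating function for rooted subtrees of $T_D$ (cf.~\eqref{EQ:TREENUM}). Decomposing such a subtree by the subset $S\subseteq[D]$ of children of the root that it contains gives the self-consistent equation $\Psi=(1+u\Psi)^D$. Since the $k=1$ term of $\Psi$ equals $1$, our bound becomes $e^b(\Psi(u)-1)$, so the required inequality is equivalent to $\Psi(u)\le 1+be^{-b}$. The Picard iteration $\Psi_0=0$, $\Psi_{n+1}=(1+u\Psi_n)^D$ increases monotonically to $\Psi(u)$, so $\Psi(u)\le y_*$ whenever $(1+uy_*)^D\le y_*$. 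Applying this with $y_*=1+be^{-b}$ reduces matters to $u(1+be^{-b})\le(1+be^{-b})^{1/D}-1$; and using $e^x-1\ge x$ with $x=\ln(1+be^{-b})/D$ it suffices to have $u(1+be^{-b})\le\ln(1+be^{-b})/D$. Substituting $u=e^b/(DK)$ rearranges exactly to $K\ge(b+e^b)/\ln(1+be^{-b})$, which is the defining value of $K$.
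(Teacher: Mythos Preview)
Your proof is correct. Both you and the paper start identically: apply Lemma~\ref{LEM:ZGH1} to reduce the claim to
\[
\sum_{\substack{W\subseteq V\\ i\in W,\ |W|\ge 2}}\tree(G[W])\,\lambda^{|W|-1}\le be^{-b},\qquad \lambda=\frac{e^b}{KD},
\]
and both eventually land on the same numerical inequality $D\lambda(1+be^{-b})\le\ln(1+be^{-b})$, which is exactly the definition of $K$.

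The organization of the middle step differs. The paper stays inside $G$: it proves a decomposition identity for $\tree(G[W])$ obtained by deleting the vertex $i$ and grouping the resulting components, then runs an induction on $|V(G)|$, bounding the contribution of each neighbour $j\in N(i)$ by the inductive hypothesis applied to $G-i$. You instead pass immediately to the $D$-ary tree via Lemma~\ref{LEM:SUBTREECOUNT}(a), recognise the resulting series as $\Psi(u)-1$ where $\Psi$ satisfies the functional equation $\Psi=(1+u\Psi)^D$, and bound $\Psi(u)$ by a fixed-point (Picard) argument. Your route is a bit cleaner: it replaces the somewhat involved manipulation of multi-sums in the paper's induction by a standard generating-function identity, at the cost of invoking Lemma~\ref{LEM:SUBTREECOUNT}(a) (which the paper's direct induction does not need). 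Conceptually the two are close cousins---the paper's induction, unwound, is precisely the Picard iteration carried out within $G$ rather than in $T_D$---but your packaging is more transparent about why the constant $K$ takes the form it does.
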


\begin{remark}\label{}
The lemma clearly implies condition \eqref{Conv-cond}. In fact,
\eqref{Conv-cond} holds even if $z$ is replaced by $\tilde z$.
\end{remark}

\begin{proof}
Using the bound in Lemma \ref{LEM:ZGH1}, it is enough to show that
\begin{equation}
\label{Conv-cond-3} \sum_{W\subseteq V:\atop i\in W,~ |W|\geq 2}
\tree(G[W])\lambda ^{|W|-1}\leq b e^{-b}.
\end{equation}
where $\lambda=e^b/(KD)$. Consider a tree $T$ contributing to $\tree
(G[W])$. After removing the point $i$ from $T$, the tree $T$
decomposes into a certain number of connected components $T_1,\dots,
T_k$, with vertex sets $U_1,\dots,U_k$. Note that
$\Pi\{U_1,\dots,U_k\}$ is a partition of $W\setminus\{i\}$ into
disjoint subsets. Classifying the spanning trees of $G[W]$ according
to these partitions, one easily obtains the identity
\begin{equation}
\label{Tree-ident} \tree(G[W])=\sum_{\Pi}\prod_{U\in \Pi}
\left(\tree(G[U])\sum_{j\in U\atop ij\in E}1\right),
\end{equation}
where the sum runs over partitions of $W\setminus\{i\}$ into disjoint
subsets.  With the help of this identity, one easily bounds the left
hand side of \eqref{Conv-cond-3} by induction on the number of
vertices in $V$.  Indeed, we first rewrite the left hand side as
\begin{equation}
\label{Conv-cond-4}
\begin{aligned}
\sum_{W\subseteq V:\atop {i\in W,~|W|\geq 2}}
& \tree(G[W])\lambda^{|W|-1}
\\ &= \sum_{W\subseteq V:\atop {i\in W,~|W|\geq 2}}
\sum_{k=1}^D\frac 1{k!} \sum_{U_1,\dots,U_k\subseteq W\setminus\{i\}
\atop {W\setminus\{i\}=\bigcup_s U_s \atop U_s\cap
U_r=\emptyset\text{ for
}s\neq r}} \prod_{s=1}^k \left(\tree(G[U_s])\lambda^{|U_s|}\sum_{j\in
U_s\atop ij\in E}1\right)
\\
&= \sum_{k=1}^D\frac 1{k!}
\sum_{U_1,\dots,U_k\subseteq V\setminus\{i\} \atop U_s\cap
U_r=\emptyset\text{ for }s\neq r} \prod_{s=1}^k
\left(\tree(G[U_s])\lambda^{|U_s|}\sum_{j\in U_s\atop ij\in
E}1\right)
\\
&{ = \sum_{k=1}^D\frac 1{k!} \sum_{j_1,\dots,j_k\in N(i)\atop j_r\neq
j_s\text{ for }s\neq r} \sum_{U_1,\dots,U_k\subseteq V\setminus\{i\}
\atop {U_s\cap U_r=\emptyset\text{ for }s\neq r\atop j_s\in U_s\text
{ for all }s}} \prod_{s=1}^k \tree(G[U_s])\lambda^{|U_s|}.}
\end{aligned}
\end{equation}
{ Rewriting the first two sums as a sum over subsets of $N(i)$ and
neglecting the non-overlap constraints on the sets $U_s$, we obtain
the bound}
\begin{equation}
\begin{aligned}
\sum_{W\subseteq V:\atop {i\in W,~|W|\geq 2}}
&\tree(G[W])\lambda^{|W|-1}
\leq \sum_{R\subseteq N(i)}\prod_{j\in R} \left( \sum_{U_j\subseteq
V\setminus\{i\} \atop U_j\ni j} \tree(G[U_j])\lambda^{|U_j|}\right)
\\
&= \sum_{R\subseteq N(i)}\prod_{j\in R} \left(\lambda+\lambda
\sum_{U_j\subseteq V\setminus\{i\} \atop {U_j\ni j\atop |U_j|\geq 2}}
\tree(G[U_j])\lambda^{|U_j|-1}\right)
\\
&\leq \sum_{R\subseteq N(i)}\prod_{j\in R} \left(\lambda+\lambda
be^{-b}\right) =\left(1+\lambda(1+ be^{-b})\right)^{|N(i)|}-1
\\
&\leq
e^{D\lambda (1+ be^{-b})} -1 =be^{-b}.
\end{aligned}
\end{equation}
\end{proof}

The above lemma gives a bound on the tails
\[
A_k=\sum_{m=1}^{\infty} \frac1{m!}\sum_{F_1,\dots,F_m\in
\ConIn(G)\atop \sum (|F_i|-1)\geq k} |\csp(L(F_1,\dots,F_m))|
\prod_{j=1}^m |z(F_j,\overline{H})|
\]
of the expansion \eqref{EQ:LNT-FZ}:

\begin{lemma}\label{LEM:TY-CONV}
Let $b>0$, let $G$ be a graph with maximum degree $D$, and let $H$ be
a weighted graph with  edgeweights in $[0,1]$. Then for every $k\ge
2$,
\[
A_k \leq b e^{-\eps k}|G|.
\]
\end{lemma}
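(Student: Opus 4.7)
The plan is to pull the tail condition $\sum_j(|F_j|-1)\ge k$ out of the sum by rewriting the weights $z(F_j,\overline{H})$ in terms of the rescaled weights $\tilde z_{F_j}=e^{\epsilon(|F_j|-1)}z(F_j,\overline{H})$, and then to bound the resulting unrestricted $L^1$-Mayer sum by $b|G|$ using Dobrushin's lemma applied to the polymer system $(\GG,\tilde z)$.

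More concretely, first observe that
\[
\prod_{j=1}^m |z(F_j,\overline{H})| = e^{-\epsilon\sum_j(|F_j|-1)}\prod_{j=1}^m|\tilde z_{F_j}|,
\]
so on the summation range where $\sum_j(|F_j|-1)\ge k$ we have a factor of at most $e^{-\epsilon k}$, giving
\[
A_k\le e^{-\epsilon k}\sum_{m=1}^{\infty}\frac{1}{m!}\sum_{F_1,\dots,F_m\in\ConIn(G)}|\csp(L(F_1,\dots,F_m))|\prod_{j=1}^m|\tilde z_{F_j}|.
\]
It remains to bound the unrestricted sum on the right by $b|G|$.

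To do so, I would verify the Dobrushin hypothesis \eqref{AP.Conv-cond} for the polymer system $(\GG,\tilde z)$ with $b_F=b|F|$: using $\ln(1+y)\le y$ and the key estimate of Lemma~\ref{lem:conv-cond}, for each $F\in\ConIn(G)$,
\[
\sum_{F'\in\ConIn(G):\,F'=F\text{ or }V(F')\cap V(F)\neq\emptyset}\!\!\ln\bigl(1+|\tilde z_{F'}|e^{b|F'|}\bigr)\le\sum_{i\in V(F)}\sum_{F'\ni i}|\tilde z_{F'}|e^{b|F'|}\le b|F|,
\]
so the hypothesis is satisfied. Lemma~\ref{thm:Dobrushin} then guarantees absolute convergence of the Mayer series, and in the standard ``pinned'' form the proof of that lemma gives, for every $F\in\ConIn(G)$,
\[
\sum_{m=1}^{\infty}\frac{1}{(m-1)!}\sum_{F_2,\dots,F_m}|\csp(L(F,F_2,\dots,F_m))|\prod_{j=2}^m|\tilde z_{F_j}|\le e^{b|F|}.
\]
Multiplying by $|\tilde z_F|$ and summing over $F$ gives an upper bound
\[
\sum_{m=1}^{\infty}\frac{m}{m!}\sum_{F_1,\dots,F_m}|\csp(L(F_1,\dots,F_m))|\prod_j|\tilde z_{F_j}|\le\sum_{F\in\ConIn(G)}|\tilde z_F|e^{b|F|}\le\sum_{i\in V(G)}\sum_{F\ni i}|\tilde z_F|e^{b|F|}\le b|G|,
\]
where the last step is another application of Lemma~\ref{lem:conv-cond}. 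Since $m\ge 1$, the same quantity dominates $\sum_m\frac{1}{m!}\sum_v|\csp(L)|\prod|\tilde z_{v_j}|$, which combined with the first display yields $A_k\le be^{-\epsilon k}|G|$.

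The main obstacle is the move from the bound on $|\ln\stab|$ stated in Lemma~\ref{thm:Dobrushin} to the pointwise/pinned bound used above; this is however part of the standard proof of Dobrushin's lemma in the cluster-expansion literature (and is needed implicitly in the phrase ``absolutely convergent''), so only a short verification is required. Everything else is bookkeeping and repeated use of Lemma~\ref{lem:conv-cond}.
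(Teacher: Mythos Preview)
Your proposal is correct, but the paper's route is shorter and avoids the ``obstacle'' you flag. The key observation you are missing is a sign trick: since $(-1)^{m-1}\csp(L(F_1,\dots,F_m))>0$ whenever $L(F_1,\dots,F_m)$ is connected (and $\csp=0$ otherwise), substituting the vector $\hat{\mathbf z}=(-|\tilde z_F|:F\in\ConIn(G))$ into the Mayer series makes every term nonpositive, so
\[
\sum_{m\ge1}\frac1{m!}\sum_{F_1,\dots,F_m}|\csp(L(F_1,\dots,F_m))|\prod_j|\tilde z_{F_j}|
=\bigl|\ln\stab(\GG,\hat{\mathbf z})\bigr|.
\]
Now the bound \eqref{AP.logZ-Dbd} of Lemma~\ref{thm:Dobrushin} applies \emph{as stated}, giving
\[
\bigl|\ln\stab(\GG,\hat{\mathbf z})\bigr|\le\sum_{F\in\ConIn(G)}\ln\bigl(1+|\tilde z_F|e^{b|F|}\bigr)\le\sum_{i\in V(G)}\sum_{F\ni i}|\tilde z_F|e^{b|F|}\le b|G|,
\]
the last two steps being exactly your use of $\ln(1+y)\le y$ and Lemma~\ref{lem:conv-cond}. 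So you never need to open up Dobrushin's lemma to extract the pinned inequality; the stated conclusion already controls the full $L^1$-sum once you evaluate at $\hat{\mathbf z}$ rather than at $|\tilde z|$. Your argument works too, but it trades this two-line trick for an appeal to a stronger (though standard) statement than what the paper has on hand.
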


\begin{proof}
Bounding $A_k$ by
\[
A_k\leq e^{-\eps k}\sum_{m=1}^{\infty}
\frac1{m!}\sum_{F_1,\dots,F_m\in \ConIn(G)\atop \sum (|F_i|-1) \geq
k} |\csp(L(F_1,\dots,F_m))| \prod_{j=1}^m |\tilde z_{F_j}|,
\]
we can ignore the condition on $\sum (|F_i|-1)$ to get
\[
A_k\leq e^{-\eps k}\sum_{m=1}^{\infty}
\frac1{m!}\sum_{F_1,\dots,F_m\in \ConIn(G)} |\csp(L(F_1,\dots,F_m))|
\prod_{j=1}^m |\tilde z_{F_j}|=e^{-\eps
k}\ln\stab(\GG,\hat{\mathbf{z}}),
\]
where $\hat{\mathbf{z}}$ denotes the vector
$(-|\tilde{z}_F|:~F\in\ConIn(G))$. We use Theorem \ref{thm:Dobrushin}
and Lemma~\ref{lem:conv-cond} to obtain the estimate
\begin{align*}
|A_k| & \leq e^{-\eps k}\sum_{F\in\ConIn(G)}\ln\left(1+ |\tilde z_F|
e^{b|F|}\right)\le e^{-\eps k}\sum_{i\in V}\sum_{F\in\ConIn(G)\atop
V(F)\ni i}\ln\left(1+ |\tilde z_F| e^{b|F|}\right)\\ &\leq e^{-\eps
k}\sum_{i\in V}\sum_{F\in\ConIn(G)\atop V(F)\ni i}|\tilde z_F|
e^{b|F|} \leq e^{-\eps k}|G| b.
\end{align*}
\end{proof}

\subsubsection{Proof of Theorem \ref{THM:HOM-CONV}}

We group the terms in Corollary \ref{COR:EXP1} according to the
subgraph of $G$ induced by the union of the $F_i$. More precisely,
for every graph $F$, define
\begin{equation}\label{VDEF}
v(F,H)=  \sum_{m=1}^\infty \frac1{m!}
\sum_{F_1,\dots,F_m\in\ConIn(F)\atop \cup_i V(F_i)=V(F)}
\csp(L(F_1,\dots,F_m)) \prod_{i=1}^m z(F_i,\overline{H}).
\end{equation}
We note that $v(H,F)=0$ if $F$ is disconnected, since then
$\csp(L(F_1,\dots,F_m))=0$. With this notation, we can also write
\eqref{EQ:LNT-FZ} as
\begin{equation}\label{EQ:LNT-V}
\ln t(G,H) =\sum_{F\in\ConIn(G)} v(F,H) =\sum_F \ind_0(F,G) v(F,H),
\end{equation}
where the last summation is extended over all isomorphism types of
connected graphs $F$ (clearly, graphs $F$ with more than $|G|$ nodes
contribute $0$).

Hence
\begin{equation}\label{EQ:LNT-SERIES}
\frac{\ln t(G_n,H)}{|G_n|} =\sum_F \frac{\ind_0(F,G_n)}{|G_n|}
v(F,H).
\end{equation}
Here $\ind_0(F,G_n)/|G_n|$ tends to some value as $n\to\infty$ by
left-convergence of the sequence $(G_n)$. Hence
\begin{align*}
\left|\frac{\ln t(G_n,H)}{|G_n|}-\frac{\ln t(G_m,H)}{|G_m|}\right|
&\le \sum_{|F|\le k}
\left|\frac{\ind_0(F,G_n)}{|G_n|}-\frac{\ind_0(F,G_m)}{|G_m|}\right|
|v(F,H)|\\
&~~~~~+\sum_{|F|>k}\Bigl(
\frac{\ind_0(F,G_n)}{|G_n|} +\frac{\ind_0(F,G_m)}{|G_m|}\Bigr) |v(F,H)|\\
&\le \sum_{|F|\le k}
\left|\frac{\ind_0(F,G_n)}{|G_n|}-\frac{\ind_0(F,G_m)}{|G_m|}\right|
|v(F,H)|+ 2 A_k.
\end{align*}
We can choose $k$ large enough so that the last term is less than
$\eps/2$, and then the first term will be less than $\eps/2$ if $n$
and $m$ are large enough.

This proves the theorem for $c(\bar H)<\frac 1{KD}$. We choose $b$ so
as to minimize $K$. For $b=2/5$ we get $K=7.964\dots<8$ (which is not
far from the best we get by this method).

\section{Right convergence implies left convergence}

\subsection{Linear independence of homomorphism functions}

The following lemmas extend some of the lemmas in \cite{ELS}.

\begin{lemma}\label{INJ-NONSING}
Let $F_1,\dots,F_k$ be non-isomorphic simple graphs. Then the
matrices
\[
M_\inj=\Bigl[\inj(F_i,F_j)\Bigr]_{i,j=1}^k
\]
and
\[
M_\surj=\Bigl[\surj(F_i,F_j)\Bigr]_{i,j=1}^k
\]
are nonsingular.
\end{lemma}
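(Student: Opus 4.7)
The plan is to show that, after reordering the graphs appropriately, each of $M_{\inj}$ and $M_{\surj}$ is upper triangular with positive diagonal entries, which immediately gives nonsingularity.

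For $M_{\inj}$, I would order $F_1,\dots,F_k$ so that $|V(F_i)|$ is non-decreasing, and within any block of graphs with the same number of vertices, $|E(F_i)|$ is non-decreasing (with ties among graphs of equal $|V|$ and $|E|$ broken arbitrarily). The key observation is that $\inj(F_i,F_j)$ can only be nonzero when $F_i$ embeds as a (not necessarily induced) subgraph of $F_j$, which forces $|V(F_i)|\le|V(F_j)|$, and, in the case $|V(F_i)|=|V(F_j)|$, forces $|E(F_i)|\le|E(F_j)|$. Moreover, if both $|V|$ and $|E|$ agree then an injective homomorphism is a bijection on vertices which maps $E(F_i)$ onto $E(F_j)$, so $F_i\cong F_j$. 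Since the $F_i$ are pairwise non-isomorphic, the entries strictly below the diagonal all vanish, and the diagonal entries equal $\inj(F_i,F_i)=\aut(F_i)>0$.

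For $M_{\surj}$, I would use a different ordering: $|V(F_i)|$ non-\emph{increasing} (breaking ties arbitrarily). A surjective homomorphism $F_i\to F_j$ requires $|V(F_i)|\ge|V(F_j)|$, and when the vertex counts agree the surjection is a bijection on vertices; combined with surjectivity on edges and the homomorphism condition, this bijection must induce a bijection $E(F_i)\to E(F_j)$ and hence an isomorphism $F_i\cong F_j$. Again non-isomorphism kills the below-diagonal entries, and the diagonal entries $\surj(F_i,F_i)=\aut(F_i)$ are positive. Hence $\det M_{\surj}=\prod_i\aut(F_i)>0$.

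The only point that needs any care is the tie case when two graphs share both $|V|$ and $|E|$ (for $M_{\inj}$) or share $|V|$ (for $M_{\surj}$); in both situations one has to argue that a hypothetical nonzero off-diagonal entry would force an isomorphism, contradicting the hypothesis. Once that is noted, the argument reduces to reading off the determinant of a triangular matrix with positive diagonal, so there is no real obstacle beyond choosing the orderings carefully.
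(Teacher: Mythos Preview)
Your proof is correct and follows essentially the same approach as the paper: both arguments observe that under a suitable ordering the matrices become triangular with diagonal entries $\aut(F_i)>0$. The only cosmetic difference is that the paper uses a single ordering (by $|V|$ and then $|E|$, both non-decreasing) under which $M_{\inj}$ is upper triangular and $M_{\surj}$ is lower triangular simultaneously, whereas you choose separate orderings to make each matrix upper triangular; your handling of the tie cases is in fact slightly more explicit than the paper's.
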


\begin{proof}
We may assume that the $F_i$ are ordered so that for $i<j$,
$|F_i|\le|F_j|$ and $|E(F_i)|\le|E(F_j)|$. Then the matrix $M_\inj$
is upper triangular and $M_\surj$ is lower triangular. Since both
matrices have positive diagonal entries, they are nonsingular.
\end{proof}

\begin{lemma}\label{HOM-NONSING-UNW}
Let $m\ge 1$ and let $\{F_1,\dots,F_k\}$ be a finite family of
non-isomorphic simple graphs closed under surjective homomorphic
image. Then the matrix
\[
M_\hom=\Bigl[\hom(F_i,F_j)\Bigr]_{i,j=1}^k
\]
is nonsingular.
\end{lemma}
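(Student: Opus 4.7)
The plan is to write $\hom$ as a product of $\surj$ and $\inj$ (or $\inj_0$), and reduce nonsingularity of $M_\hom$ to nonsingularity of $M_\surj$ and $M_\inj$, which is already proved in Lemma \ref{INJ-NONSING}.

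First, I would establish the basic image-factorization identity. Any homomorphism $\phi: F_i \to F_j$ factors uniquely as
\[
F_i \xrightarrow{\;\phi_0\;} \phi(F_i) \hookrightarrow F_j,
\]
where $\phi(F_i)$ is the image subgraph of $F_j$ (a subgraph in the usual sense: its edges are the $\phi$-images of the edges of $F_i$), and $\phi_0$ is a homomorphism surjective on both nodes and edges. Conversely, any subgraph $S$ of $F_j$ together with any surjective homomorphism $F_i \to S$ gives back a homomorphism $F_i \to F_j$. Grouping subgraphs $S \sub F_j$ by isomorphism type $F'$ gives
\[
\hom(F_i,F_j) \;=\; \sum_{F'} \surj(F_i,F')\cdot \ind_0(F', F_j),
\]
(more precisely $\inj_0$ if we want to count subgraphs rather than induced subgraphs; I will use $\inj_0$ throughout, so the sum counts pairs of a surjective homomorphism onto an abstract $F'$ and an injective embedding of $F'$ as a subgraph into $F_j$).

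Second, I would use the closure hypothesis. If $\surj(F_i, F') > 0$ then $F'$ is (isomorphic to) a surjective homomorphic image of $F_i$, so by the closure assumption $F' \cong F_\ell$ for some $\ell \in [k]$. Hence we may restrict the sum to $\ell = 1,\dots,k$:
\[
\hom(F_i,F_j) \;=\; \sum_{\ell=1}^k \surj(F_i, F_\ell)\, \inj_0(F_\ell, F_j)
\;=\; \sum_{\ell=1}^k \surj(F_i, F_\ell)\,\frac{\inj(F_\ell, F_j)}{\aut(F_\ell)}.
\]
In matrix form this reads $M_\hom = M_\surj \, D^{-1} \, M_\inj$, where $D$ is the diagonal matrix with entries $\aut(F_1), \dots, \aut(F_k) > 0$.

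Finally, I invoke Lemma \ref{INJ-NONSING}, which tells us that $M_\surj$ and $M_\inj$ are both nonsingular; since $D^{-1}$ is clearly nonsingular, so is the product $M_\hom$. The only real content beyond the previous lemma is the image factorization together with the observation that closure under surjective homomorphic image is exactly what is needed to keep the middle index confined to $\{1,\dots,k\}$, so that we obtain a square matrix product rather than a rectangular one. That step is where the hypothesis is essential, and is the one place I would be careful to spell out.
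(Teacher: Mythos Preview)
Your proposal is correct and follows exactly the same route as the paper: factor $\hom$ through the image to obtain $\hom(F_i,F_j)=\sum_\ell \surj(F_i,F_\ell)\,\inj(F_\ell,F_j)/\aut(F_\ell)$, use the closure hypothesis to confine the middle index to $\{1,\dots,k\}$, and read off $M_\hom=M_\surj D_\aut^{-1}M_\inj$, which is nonsingular by Lemma~\ref{INJ-NONSING}. Your explicit justification of the image factorization and your remark that closure is exactly what keeps the product square are both apt and slightly more detailed than the paper's version.
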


\noindent Examples of such families are all simple graphs with at
most $q$ nodes, or all connected simple graphs with at most $q$
nodes, or with at most $m$ edges. We don't know if this proposition
holds for more general (perhaps all?) families of graphs.

\begin{proof}
We can express homomorphisms by surjective and injective
homomorphisms as follows:
\[
\hom(F_i,F_j)=\sum_{J} \frac{\surj(F_i,J)\inj(J,F_j)}{\aut(J)} ,
\]
where the summation extends over all simple graphs $J$ for which
$\surj(F_i,J)>0$. All such graphs $J$ belong to the family
$\{F_1,\dots,F_k\}$, which implies that if $M_\inj$ and $M_\surj$ are
as in Lemma \ref{INJ-NONSING}, and $D_\aut$ is the $k\times k$
diagonal matrix with the values $\aut(F_i)$ in the diagonal, then
\[
M_\hom= M_\surj D_\aut^{-1} M_\inj\, ,
\]
proving by Lemma \ref{INJ-NONSING} that $M_\hom$ is nonsingular.
\end{proof}

\subsection{Convergence of graph sequences}

\begin{theorem}\label{THM:RIGHT2LEFT}
Let $(G_1,G_2,\dots)$ be a sequence of simple graphs with degrees
bounded by $D$, and assume that there is a $\delta>0$ such that for
every simple looped graph $H$ with all degrees at least
$(1-\delta)|H|$, the sequence $\ln\hom(G_n,H)/|G_n|$ is convergent as
$n\to\infty$. Then the sequence $(G_1,G_2,\dots)$ is left-convergent.
\end{theorem}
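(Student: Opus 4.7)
The plan is to show that $\ind_0(F,G_n)/|G_n|$ converges for every connected simple graph $F$; by Möbius inversion this is equivalent to convergence of $\hom(F,G_n)/|G_n|$ for every connected $F$, hence to left-convergence of $(G_n)$. Shrinking $\delta$ if necessary so that $\delta<1/(8D)$, every admissible $H$ satisfies $c(\overline H)\le\delta<1/(8D)$, so Corollary~\ref{COR:EXP1} combined with Lemma~\ref{LEM:TY-CONV} gives the absolutely convergent expansion
\[
\frac{\ln t(G_n,H)}{|G_n|}=\sum_{F\text{ conn.}}\frac{\ind_0(F,G_n)}{|G_n|}\,v(F,H),
\]
with the tail from $|F|>k$ bounded by $be^{-\epsilon k}$ uniformly in $n$ and in admissible $H$. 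Since $\ln t(G_n,H)/|G_n|=\ln\hom(G_n,H)/|G_n|-\ln|H|$, the hypothesis guarantees that the LHS converges as $n\to\infty$ for every admissible $H$.

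Because $\ind_0(F,G_n)/|G_n|$ is bounded in $n$ for each fixed connected $F$ (Lemma~\ref{LEM:SUBTREECOUNT}(c)), a diagonal compactness argument extracts a subsequence along which $p_F:=\lim\ind_0(F,G_n)/|G_n|$ exists for every connected $F$. To conclude, it suffices to prove these subsequential limits are unique: given a second such subsequence with limits $p_F'$ and setting $r_F:=p_F-p_F'$, the agreement of the right-convergent limits along the two subsequences yields
\[
\sum_{F\text{ conn.}}r_F\,v(F,H)=0\qquad\text{for every admissible }H,
\]
and the goal is to deduce $r_F=0$ for every connected $F$. I would prove this by induction on $k=|F|$, using the family of test graphs $H=K_q^\circ\setminus R$ (the complete looped graph on $[q]$ minus a sparse simple graph $R$), which is admissible whenever $\Delta(R)/q<\delta$.

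For the induction step, I would Taylor-expand $v(F,K_q^\circ\setminus R)$ in the edges of $R$, using $z(F_i,\overline H)=\sum_{J\in\CSpan(F_i)}(-1)^{|E(J)|}\hom(J,R)/q^{|J|}$ and unwinding definition~\eqref{VDEF}. Taking $R$ to consist of many disjoint copies of a fixed connected template $R_0$ with $|R_0|\le k$ and isolating the leading coefficient in the number of copies (with $q$ taken sufficiently large), $v(F,H)$ becomes, up to lower-order terms absorbed by the inductive hypothesis and the uniform $O(e^{-\epsilon k})$ tail from larger $F$, a linear combination (indexed by connected $F$ with $|F|\le k$) of the homomorphism counts $\hom(R_0,F)$. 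Varying $R_0$ over connected graphs of size $\le k$ and invoking the non-singularity of the resulting $\hom$-matrix from Lemmas~\ref{INJ-NONSING} and~\ref{HOM-NONSING-UNW} gives an invertible finite linear system forcing $r_F=0$ for every connected $F$ with $|F|=k$. The main obstacle is the precise combinatorial identification, inside the small-$R$ expansion of $v(F,H)$, of the leading coefficient as a genuine homomorphism-type invariant of the pair $(R_0,F)$ so that Lemma~\ref{HOM-NONSING-UNW} applies cleanly; once this is in hand, the linear algebra of Section~4.1 closes the induction, and uniqueness of subsequential limits yields convergence of every $\ind_0(F,G_n)/|G_n|$, completing the proof.
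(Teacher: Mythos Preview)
Your strategy is essentially the paper's: feed the Mayer expansion \eqref{EQ:LNT-V} with test graphs $H$ that are looped complements of sparse graphs on $[q]$, and finish with the nonsingularity of Lemma~\ref{HOM-NONSING-UNW}. But you have wrapped this in several layers that the paper does without.

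The paper fixes $m$, lets $F_1,\dots,F_N$ be the connected graphs on $2,\dots,m$ nodes, and for each $i$ takes $H_i$ to be the looped complement of $F_i$ together with $q-|F_i|$ isolated vertices---so a \emph{single} copy of each $F_i$, not many. Your ``main obstacle'' then collapses to the two-line computation
\[
u(F_j,H_i)=q^{-|F_j|}(-1)^{|E(F_j)|}\bigl(\hom(F_j,F_i)+O(q^{-1})\bigr),
\]
which follows from the expansion \eqref{WF2} because every term with $k\ge 2$ has $\sum_r|J_r|>|F_j|$ and hence carries an extra factor $q^{-1}$; only the $k=1$ term $J_1=F_j$ survives at order $q^{-|F_j|}$, and there $t(F_j,\overline{H_i})=q^{-|F_j|}\hom(F_j,F_i)$. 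Thus the $N\times N$ matrix $(u(F_j,H_i))$ is a diagonal rescaling of $(\hom(F_j,F_i))$ plus an $O(1/q)$ perturbation, hence invertible for large $q$ by Lemma~\ref{HOM-NONSING-UNW}. Inverting expresses each $\inj_0(F_j,G_n)/|G_n|$ as a finite linear combination of the $\ln t(G_n,H_i)/|G_n|$ plus a remainder of size $O(q^{-2})$ uniform in $n$ (from the tail bound of Lemma~\ref{LEM:TY-CONV}), and Cauchy-ness of the subgraph densities follows directly.

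So no compactness/subsequence reduction, no induction on $|F|$, and no ``many disjoint copies'' device is needed: the relevant asymptotic parameter is simply $q$, and the leading coefficient you were trying to isolate is already $\hom(F_j,F_i)$ on the nose. Your outline is correct, but once you use a single copy of each $F_i$ and expand in $1/q$ rather than in the number of copies, the obstacle you flag disappears and the argument becomes the paper's direct matrix inversion.
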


\begin{proof}
Let $m\ge 1$ and let $\FF_m=\{F_1,\dots,F_N\}$ be the set of all
connected simple graphs with $2\le |F_i|\le m$. By Lemma
\ref{HOM-NONSING-UNW}, the matrix
\[
M=\Bigl[\hom(F_i,F_j)\Bigr]_{i,j=1}^N
\]
is nonsingular.

Let $q>m$. Add $q-|F_i|$ isolated nodes to $F_i$ and take the
complement to get a simple graph $H_i$ on $[q]$ with loops added at
the nodes. We think of $H_i$ as a weighted graph with all weights
$1$. Every node in $H_i$ has degree at least $q-m$, so if we choose
$q$ large enough, the condition on $H$ in the theorem is satisfied by
every $H_i$.

Consider any graph $G$ with all degrees at most $D$. We can rewrite
\eqref{EQ:LNT-FZ} as follows:
\begin{equation}\label{LOGT}
\ln t(G,H_i)=\sum_F \inj_0(F,G) u(F,H_i),
\end{equation}
where the summation extends over all connected graphs $F$, and
\begin{align}\label{WF2}
u(F,H_i)&=\sum_{k=1}^\infty \frac1{k!}\sum_{J_1,\dots,J_k\in
\Conn(F)\atop \cup J_i=F} \csp(L(J_1,\dots,J_k)) \prod_{r=1}^k
t(J_r,H_i{-}1).
\end{align}
Here
\[
t(J_r,H_i{-}1)= q^{-|J_r|}(-1)^{|E(J_r)|} \hom(J_r,F_i),
\]
and so
\[
\prod_{r=1}^k t(J_r,H_i{-}1) = (-1)^{\sum_r|E(J_r)|}q^{-\sum_r|J_r|}
\prod_{r=1}^k \hom(J_r,F_i).
\]
Note that the exponent of $q$ is less than $-|F|$ except for $k=1$
(when $J_1=F$). Hence
\begin{equation}\label{EQ:UFJ}
u(F_j,H_i)= q^{-|F_j|}(-1)^{|E(F_j)|}(\hom(F_j,F_i)+O(q^{-1})),
\end{equation}
for all $1\le i,j\le N$ and
\begin{equation}\label{EQ:UF}
u(F,H_i)=O(q^{-|F|-1})
\end{equation}
if $|F|>m$. Here and in what follows, the constants implied in the
$O$ may depend on $D$ and $m$ (and so also on $N$), but not on $q$,
$G$ and $\eps$.

By Lemma \ref{HOM-NONSING-UNW} it follows that if $q$ is large
enough, then the matrix $(u(F_i,H_j))_{i,j=1}^N$ is invertible.
Furthermore, if $(w_{ij})_{i,j=1}^n$ denotes its inverse, then
\[
w_{ij}=q^{|F_j|}(-1)^{|E(F_j)|}(M^{-1})_{ij} +O(q^{|F_j|-1}),
\]
and so
\begin{equation}\label{U-EST}
|w_{ij}|=O(q^{|F_j|})=O(q^m).
\end{equation}
Write
\begin{equation}\label{LOGT-APPROX}
\ln t(G,H_j)=\sum_{i=1}^N \inj_0(F_i,G) u(F_i,H_j) + R(G,H_j),
\end{equation}
where
\begin{equation}\label{LOGT-REM}
R(G,H_j)=\sum_{|F|> m} \inj_0(F,G) u(F,H_j)
\end{equation}
is a remainder term, which we can estimate as follows, using Lemma
\ref{LEM:SUBTREECOUNT}(d):
\begin{align}\label{R-EST}
|R(G,H_j)| & \le \sum_{r=m+1}^\infty \sum_{|F|=r} \inj_0(F,G)
|u(F,H_j)| = \sum_{r=m+1}^\infty \sum_{|F|=r} \inj_0(F,G)
O(q^{-r-1}) \nonumber\\
& = \sum_{r=m+1}^\infty 2^{Dr} |G| O(q^{-r-1}) =O(q^{-m-2})|G|.
\end{align}

We can view \eqref{LOGT-APPROX} as a system of $N$ equations in the
$N$ unknowns $\inj_0(F_i,G)$, from which these unknowns can be
expressed:
\begin{equation}\label{EQ:INJ0}
\inj_0(F,G) =\sum_{j=1}^N w_{ji} \ln t(G,H_j) +r_i(G),
\end{equation}
where
\[
r_i(G)=\sum_{j=1}^N w_{ji}R(G,H_j) =  O(q^m)O(q^{-m-2})|G| =
O(q^{-2}) |G|.
\]

Now let $\eps>0$ be given. Choosing $q$ large enough, we have
$|r_i(G)|<\eps|G|$ for all $1\le i\le N$ and every graph $G$. By
hypothesis, if $q$ is sufficiently large, then the sequence $\ln
(t(G_n,H_j))/{|G_n|}$ will be convergent for every $1\le j\le N$ as
$n\to\infty$, and so we can choose a positive integer $n_0$ such that
for $n,n'>n_0$, we have
\[
\left|\frac{\ln t(G_n,H_j)}{|G_n|}-\frac{\ln
t(G_{n'},H_j)}{|G_n|}\right|\le \eps q^{-m}.
\]
Then by \eqref{EQ:INJ0},
\begin{align*}
&\left| \frac{\inj_0(F_i,G_n)}{|G_n|}-\frac{\inj_0(F_i,G_{n'})}{|G_{n'}|} \right|\\
&= \sum_{i=1}^N \Bigl(w_{ji} \frac{\ln t(G_n,H_j)}{|G_n|}-w_{ji}
\frac{\ln t(G_{n'},H_j)}{|G_{n'}|}
+ \frac{r_i(G_n)}{|G_n|}-\frac{r_i(G_{n'})}{|G_{n'}|}\Bigr)\\
& \le \sum_{i=1}^N |w_{ji}|\cdot \Bigl|\frac{\ln
t(G_n,H_j)}{|G_n|}-\frac{\ln t(G_{n'},H_j)}{|G_{n'}|}\Bigr| + O(\eps) \\
&\le O(q^m) \eps q^{-m} + O(\eps) =O(\eps).
\end{align*}

This proves that the sequence $(\inj_0(F_i,G_n)/|G_n|:~n=1,2,\dots)$
is convergent for all $F_i\in\FF_m$. Since this holds for every $m\ge
1$, this proves the Theorem.
\end{proof}

In the proof above, the graphs $H_i$ have many twin nodes (all the
added nodes). We can always replace these by a single node of large
weight, to get a weighted graph $H_i'$ on at most $m+1$ nodes. The
argument would remain essentially the same if we replaced the $0$
edgeweights in $H_i$ by $1-\delta$ for any fixed $\delta>0$. Hence we
get the following variant:

\begin{theorem}\label{RIGHT2LEFT2}
Let $(G_1,G_2,\dots)$ be a sequence of simple graphs with degrees
bounded by $D$, and let $F$ be a simple graph. Assume that there is a
$\delta>0$ such that for every weighted graph $H$ on $|F|+1$ nodes
with all edgeweights in $[1-\delta,1]$, the sequence $(\ln
t(G_n,H))/|G_n|$ is convergent as $n\to\infty$. Then the sequence
$\hom(F,G_n)/|G_n|$ is convergent.
\end{theorem}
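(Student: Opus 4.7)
The plan is to adapt the proof of Theorem \ref{THM:RIGHT2LEFT} with two modifications to the probe graphs $H_i$ used there. Recall that in that proof one constructs $H_i$ by adding $q-|F_i|$ isolated nodes to $F_i$ and taking the complement (with loops), yielding a simple looped graph on $q$ nodes with $0/1$ edge weights. Two elementary observations let us reshape this into the form demanded by the present hypothesis: (i) the $q-|F_i|$ padding nodes are mutually twins, and replacing any collection of twin nodes with unit weights by any other collection of twin nodes with the same total node weight preserves $\hom(J,\cdot)$ and hence $t(J,\cdot)$ for every $J$; and (ii) the only $0$ edge weights in $H_i$ occur on the edges of $F_i$, and replacing them by $1-\delta$ simply introduces a factor $\delta^{|E(J_r)|}$ in the key computation $t(J_r,H_i{-}1)=q^{-|J_r|}(-1)^{|E(J_r)|}\hom(J_r,F_i)$.

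Accordingly, set $m=|F|$ (the case $|F|\le 1$ being trivial) and let $\FF_m=\{F_1,\dots,F_N\}$ be the set of connected simple graphs with $2\le|F_i|\le m$, so $F\in\FF_m$. For each $F_i$ and $q$ large, define a weighted graph $H_i'$ on exactly $|F|+1$ nodes by placing $F_i$ on $|F_i|$ of the nodes with unit node weight, placing $|F|+1-|F_i|$ further ``twin'' nodes each of node weight $(q-|F_i|)/(|F|+1-|F_i|)$, and assigning edge weight $1-\delta$ to the edges of $F_i$ and edge weight $1$ to every other pair (including loops). All edge weights lie in $[1-\delta,1]$, so the hypothesis applies to each $H_i'$, and the calculation from the original proof now reads
\[
t(J_r,H_i'{-}1)=q^{-|J_r|}(-\delta)^{|E(J_r)|}\hom(J_r,F_i),
\]
leading to
\[
u(F_j,H_i')=q^{-|F_j|}(-\delta)^{|E(F_j)|}\bigl(\hom(F_j,F_i)+O(q^{-1})\bigr)
\]
for $1\le i,j\le N$, with $u(F,H_i')=O(q^{-|F|-1})$ for $|F|>m$ (the powers of $\delta\le 1$ only improve the bounds).

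From here the argument proceeds exactly as in Theorem \ref{THM:RIGHT2LEFT}. The matrix $[u(F_j,H_i')]_{i,j=1}^N$ decomposes, up to lower-order terms in $q^{-1}$, as $[\hom(F_j,F_i)]$ right-multiplied by a nonsingular diagonal matrix, so by Lemma \ref{HOM-NONSING-UNW} it is invertible for $q$ large, with inverse entries $O(q^m)$. The remainder estimate \eqref{R-EST} carries over unchanged, and solving the linear system \eqref{EQ:INJ0} expresses each $\inj_0(F_i,G)/|G|$ as a bounded linear combination of the quantities $\ln t(G,H_j')/|G|$ plus a term of order $O(q^{-2})$. Applying the convergence hypothesis to each of the finitely many $H_j'$ then yields convergence of $\inj_0(F_i,G_n)/|G_n|$ for every $F_i\in\FF_m$. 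Since every connected surjective homomorphic image $F'$ of $F$ lies in $\FF_m$, the identity $\hom(F,G_n)=\sum_{F'}\surj(F,F')\inj_0(F',G_n)$ then yields convergence of $\hom(F,G_n)/|G_n|$.

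The main technical point is the twin-replacement identity (i), which is a straightforward consequence of the definition of $\hom$ for weighted graphs (summing over images in a group of twins amounts to multiplying by their total weight), combined with the bookkeeping needed to verify that the extra $\delta^{|E(F_j)|}$ factors from (ii) do not interfere with the nonsingularity of the linear system. Neither is deep, but some care is required to track exponents of $q$ and powers of $\delta$ so that the quantitative estimates from Theorem \ref{THM:RIGHT2LEFT} transfer with only cosmetic changes.
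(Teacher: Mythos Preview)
Your proposal is correct and follows essentially the same approach the paper sketches: collapse the $q-|F_i|$ twin padding nodes into a small number of heavy nodes and replace the $0$ edgeweights by $1-\delta$, after which the proof of Theorem~\ref{THM:RIGHT2LEFT} goes through with only the extra $\delta^{|E(\cdot)|}$ factors. Your choice to spread the weight over $|F|+1-|F_i|$ twins (rather than a single node) so as to land on exactly $|F|+1$ vertices is a harmless refinement that matches the hypothesis precisely.
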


\section{Food for thought}

We mention some directions for further research.

\medskip

{\bf Quantitative bounds.} It would be interesting to make the
relationship between the numbers $\hom(F,G)/|G|$ and $(\ln
t(G,H))/|G|$ more explicit.

\medskip

{\bf Limit objects.} Benjamini and Schramm \cite{BSch} associated a
limit object with every left-convergent sequence of bounded degree
graphs, in the form of a probability distribution (with some special
properties) on countable rooted graphs with the same degree bound.
Other constructions of limit objects include graphings \cite{Elek1}
and measure preserving graphs \cite{LovH}. The ``left'' quantities
like $t(F,G)$ can be defined easily when $G$ is replaced by such a
limit object. Our Theorem \ref{THM:HOM-CONV} suggests that the
quantities $\ln t(G,H)/|G|$ can also be extended. However, the
definition (in other words, the description of the limiting value in
terms of the limit object) is not clear at all.

\medskip

{\bf Temperature.} Most of the time we have considered weighted
graphs $H$ with $\alpha_H=1$, whose edgeweights are between $0$ and
$1$, and close to $1$. Let us consider edgeweights of the form
$\beta_{ij}=\exp(-B_{ij})$, where $B_{ij}\ge 0$, and normalize so
that $\max_{i,j} B_{ij}=1$. Also consider the weighted graph
$H^{1/T}$, where $T$ is a parameter which in statistical physics
would be called the {\it temperature}, and the edge weights are
raised to the $1/T$ power. In this notation
\[
\hom(G,H^{1/T}) = \E_\phi \exp\Bigl(\frac1T \sum_{ij\in E(G)}
B_{\phi(i)\phi(j)}\Bigr),
\]
where $\phi$ is a random map $G\to H$. Furthermore,
\[
c(\overline{H}) =\max_i \sum_j \alpha_j (1-\beta_{ij}) \le \max_i
\frac1T \sum_j \alpha_j B_{ij} \le \frac1T.
\]
So it follows that if the temperature $T$ is larger than $2D$
then for
every left-convergent graph sequence $(G_n)$, the partition functions
$\hom(G_n,H^{1/T})$ are convergent.

What kind of convergence does it mean if the partition functions are
convergent at smaller temperature as well? This is not a local
property any more; still, is it related to some property ``from the
left''?

\medskip

{\bf Distance.} One would like to define a cut-distance type metric
for bounded degree graphs. Let $G_1$ and $G_2$ be two graphs with
degrees bounded by $D$ on the same node set $V=[n]$. Let $e_i(S,T)$
denote the number of edges in $G_i$ connecting $S$ and $T$
($S,T\subseteq V$). Suppose that we have
\begin{equation}\label{EQ:NEAR}
|e_1(S,T)-e_2(S,T)|\le \eps n
\end{equation}
for all $S,T\subseteq V$. Then for every weighted graph $H$ with
$V(H)=[q]$, $\alpha_H=1$ and $1/B \le \beta_{u,v}\le B$ for some
$B\ge 1$, we have
\begin{align*}
t(G_1,H) &= \E_\phi\Bigl(\prod_{ij\in E(G_1)}
\beta_{\phi(i)\phi(j)}\Bigr) = \E_\phi \Bigl(\prod_{u,v\in V(H)}
\beta_{uv}^{e_1(\phi^{-1}(u),\phi^{-1}(v))}\Bigr)\\
&\le \E_\phi \Bigl(\prod_{u,v\in V(H)}
\beta_{uv}^{e_2(\phi^{-1}(u),\phi^{-1}(v))}B^{\eps n\binom{q}{2}}\Bigr)\\
&= B^{\eps n\binom{q}{2}} t(G_2,H).
\end{align*}
Hence
\[
\left|\frac{\ln t(G_1,H)}{n}-\frac{\ln t(G_2,H)}{n}\right| \le \eps
\binom{q}{2} \ln B.
\]
By Theorem \ref{RIGHT2LEFT2}, this implies that $\hom(F,G_1)/|G_1|$
and $\hom(F,G_2)/|G_2|$ are also close if $n$ is large enough.

The trouble is that condition \eqref{EQ:NEAR} is too strong: it does
not hold for two random $D$-regular graphs, for example. Perhaps it
is possible to replace it by some condition asserting that it holds
``on the average''?

\section*{Appendix: Grids, a case study}

Let $P_n\square P_m$ denote the $n\times m$ grid (the Cartesian
product of a path with $n$ nodes and a path with $m$ nodes); let
$C_n\square P_m$ denote the $n\times m$ cylindrical grid (the
Cartesian product of a cycle with $n$ nodes and a path with $m$
nodes), and let $C_n\square C_m$ denote the $n\times m$ toroidal grid
(the Cartesian product of a cycle with $n$ nodes and a cycle with $m$
nodes). The sequences of $n\times m$ grids, cylindrical grids and
toroidal grids (and any merging of these three) are trivially
left-convergent if $m,n\to\infty$.

Grids are also right-convergent in a strong sense:

\begin{prop}\label{PROP:GRID}
For every weighted graph $H$, the sequence $\ln\hom(P_n\square
P_m)/nm$ is convergent as $n,m\to\infty$.
\end{prop}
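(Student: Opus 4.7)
The plan is the transfer matrix method, which exploits the 1D strip structure of grids. Let $q=|V(H)|$. For a column configuration $\phi\in V(H)^n$, set
\[
A_n(\phi)=\prod_{i=1}^n \alpha_{\phi(i)}\prod_{i=1}^{n-1}\beta_{\phi(i)\phi(i+1)},\qquad B_n(\phi,\psi)=\prod_{i=1}^n \beta_{\phi(i)\psi(i)},
\]
the weights of a single column (with its $n-1$ internal edges) and of the $n$ edges between two consecutive columns. Grouping the sum defining $\hom(P_n\square P_m,H)$ column by column yields
\[
\hom(P_n\square P_m,H)=\langle \mathbf{u}_n,(D_{A_n}B_n)^{m-1}\mathbf{u}_n\rangle,
\]
where $\mathbf{u}_n(\phi)=A_n(\phi)$ and $D_{A_n}$ is the corresponding diagonal matrix on $\mathbb{R}^{q^n}$. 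The toroidal variant gives $\hom(C_n\square C_m,H)=\operatorname{tr}(T_n^m)$ for a closely related $q^n\times q^n$ transfer matrix $T_n$ (built with wrap-around edges included); crucially, this trace equals $\operatorname{tr}(T_m^n)$ as well, since both compute $\hom(C_n\square C_m,H)$.

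First I would reduce to the torus. Assume (as we may, for $\ln\hom$ to be well defined) that all $\alpha_i,\beta_{ij}$ lie in a positive range $[b_0,B_0]$ with $b_0>0$; then adding or removing a single edge multiplies $\hom$ by a factor in $[b_0,B_0]$. Since $P_n\square P_m$ and $C_n\square C_m$ differ by only $n+m$ edges, their log-per-site values differ by $O((n+m)/nm)=o(1)$, so it suffices to prove convergence for the torus.

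Next, let $\lambda_n$ denote the eigenvalue of $T_n$ of largest absolute value; standard spectral theory gives $\tfrac{1}{m}\ln|\operatorname{tr}(T_n^m)|\to\ln|\lambda_n|$ as $m\to\infty$, so $\ell_n:=\tfrac{1}{n}\ln|\lambda_n|$ is well defined. To show $\lim_n \ell_n$ exists, use subadditivity: cutting $C_{n_1+n_2}$ into $P_{n_1}\sqcup P_{n_2}$ (removing $2$ edges) and taking the product with $C_m$ removes $2m$ edges in $C_{n_1+n_2}\square C_m$; since $\hom$ of a disjoint union factors and each removed edge changes $\ln\hom$ by $O(1)$, we obtain
\[
\bigl|\ln\hom(C_{n_1+n_2}\square C_m,H)-\ln\hom(C_{n_1}\square C_m,H)-\ln\hom(C_{n_2}\square C_m,H)\bigr|\le Cm
\]
for a constant $C=C(H)$. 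Dividing by $m$, letting $m\to\infty$, and invoking Fekete yields $\ell_n\to\ell$ for some $\ell\in\mathbb R$.

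For the joint limit: given $\epsilon>0$, pick $n_0$ so $|\ell_n-\ell|<\epsilon/2$ for $n\ge n_0$ and, for each such $n$, a threshold $m_0(n)$ such that Perron convergence gives $|\tfrac{1}{nm}\ln|\operatorname{tr}(T_n^m)|-\ell_n|<\epsilon/2$ for $m\ge m_0(n)$; the $n\leftrightarrow m$ trace symmetry covers the complementary regime where $m<m_0(n)$ but $n$ is large. The \emph{main obstacle} is precisely this uniformity step: the spectral gap of $T_n$ may shrink rapidly (even exponentially) with $n$, so the Perron convergence $\tfrac{1}{m}\ln|\operatorname{tr}(T_n^m)|\to\ln|\lambda_n|$ is not uniform in $n$. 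The torus symmetry $\operatorname{tr}(T_n^m)=\operatorname{tr}(T_m^n)$ is exactly the tool that circumvents this, letting us always drive the ``fast'' direction to infinity first.
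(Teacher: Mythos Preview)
Your transfer-matrix route is quite different from the paper's, which is a short two-dimensional Fekete argument: after normalizing so that the edgeweights lie in $[0,1]$, observe that cutting $P_{n_1+n_2}$ into $P_{n_1}$ and $P_{n_2}$ only removes edges from $P_{n_1+n_2}\square P_m$, whence
\[
\hom(P_{n_1+n_2}\square P_m,H)\le\hom(P_{n_1}\square P_m,H)\,\hom(P_{n_2}\square P_m,H);
\]
set $s_0=\inf_{n,m}\frac{\ln\hom(P_n\square P_m,H)}{nm}$ and iterate the submultiplicativity in both the $n$- and $m$-directions to get $\limsup\le s_0+\eps$ for every $\eps>0$.

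Your argument, by contrast, has a genuine gap at precisely the point you flag as the ``main obstacle.'' Knowing that $a_{n,m}:=\tfrac{1}{nm}\ln\operatorname{tr}(T_n^m)$ is symmetric in $(n,m)$, that $\lim_m a_{n,m}=\ell_n$ for each fixed $n$, and that $\ell_n\to\ell$, does \emph{not} force $a_{n,m}\to\ell$ jointly. A symmetric array with $a_{n,n}=1$ and $a_{n,m}=0$ for $n\neq m$ satisfies all three hypotheses (with $\ell_n\equiv 0$) yet has no joint limit. Your sentence ``the $n\leftrightarrow m$ trace symmetry covers the complementary regime'' hides the same non-uniformity: when $m<m_0(n)$ you would need $n\ge m_0(m)$ to invoke the swapped estimate, and there is no single threshold $N$ guaranteeing that one of these inequalities holds for all $n,m\ge N$ if $m_0(\cdot)$ grows.

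There are two honest fixes. First, your own approximate subadditivity bound (and its $m$-analogue) already suffices for a two-dimensional Fekete argument giving the joint limit directly---but then the transfer matrices are irrelevant and you have essentially rediscovered the paper's proof. Second, if you want to keep the spectral picture, exploit that $T_n$ is similar to the \emph{symmetric} matrix $D_{A_n}^{1/2}B_nD_{A_n}^{1/2}$ and hence has real spectrum: for even $m$ this gives $\lambda_n^m\le\operatorname{tr}(T_n^m)\le q^n\lambda_n^m$, i.e.\ the uniform estimate $0\le a_{n,m}-\ell_n\le(\ln q)/m$, which does close the joint limit (parity is then handled on the $P_n\square P_m$ side by comparing $m$ with $m\pm 1$). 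Either works, but neither is what you wrote.
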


\begin{proof}
Let
\[
s_0=\inf_{n,m}\frac{\ln\hom(P_n\square P_m)}{nm}.
\]
Fix an $\eps>0$, and choose $a,b\ge 1$ so that
\[
\frac{\ln\hom(P_a\square P_b)}{ab} \le s_0+\eps,
\]
and write $n=au+r$ and $m=bv+s$ where $0\le r<a$ and $0\le s<b$.
Using that trivially
\begin{equation}\label{SUB}
\hom(P_{n_1+n_2}\square P_m ,H)\le \hom(P_{n_1}\square
P_m,H)\hom(P_{n_2}\square P_m,H),
\end{equation}
it follows by an argument which could be called ``2-dimensional
Fekete Lemma'' that
\begin{align*}\label{}
&\frac{\ln\hom(P_n\square P_m)}{nm} \le \frac{u\ln\hom(P_a\square
P_m)+\ln\hom(P_r\square P_m)}{nm}\\
&~~~~~\le \frac{uv\ln\hom(P_a\square P_b)+u\ln\hom(P_a\square
P_s)+v\ln\hom(P_r\square P_b)+\ln\hom(P_r\square P_s)}{nm}\\
&~~~~~\le s_0+\eps +O(\frac1n +\frac1m)
\end{align*}
if $n,m\to\infty$.
\end{proof}

The situation is more complicated with cylindrical grids:

\begin{prop}\label{PROP:CYLGRID}
{\rm (a)} If $n$ is restricted to even numbers, then for every
weighted graph $H$, the sequence $\ln\hom(C_n\square P_m)/nm$ is
convergent as $n,m\to\infty$.

\smallskip

{\rm (b)} If $H$ is connected and nonbipartite, then the sequence
$\ln\hom(C_n\square P_m)/nm$ is convergent as $n,m\to\infty$.
\end{prop}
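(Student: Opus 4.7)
The strategy is a transfer-matrix analysis reducing both statements to the grid limit in Proposition~\ref{PROP:GRID}. Index column configurations by $\phi:[m]\to V(H)$ and set
\[
V(\phi)=\prod_{j=1}^m\alpha_{\phi(j)}\prod_{j=1}^{m-1}\beta_{\phi(j)\phi(j+1)},\quad W(\phi,\psi)=\prod_{j=1}^m\beta_{\phi(j)\psi(j)},\quad M_{\phi\psi}=V(\phi)W(\phi,\psi).
\]
Unfolding gives $\hom(C_n\square P_m,H)=\tr(M^n)$ and $\hom(P_n\square P_m,H)=\mathbf 1^TM^{n-1}v$ with $v_\phi=V(\phi)$. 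On the support of $V$, conjugation by $\text{diag}(V)^{1/2}$ shows $M$ is similar to the symmetric matrix $\tilde M_{\phi\psi}=V(\phi)^{1/2}W(\phi,\psi)V(\psi)^{1/2}$, so all $K\le|V(H)|^m$ eigenvalues of $M$ are real; write $\rho_m$ for the spectral radius.

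For each fixed $m$, the goal is the spectral asymptotics
\[
\ln\hom(C_n\square P_m,H)=n\ln\rho_m+O(m),\qquad\ln\hom(P_n\square P_m,H)=n\ln\rho_m+O(m).
\]
In (a), with $n$ even, the identity $\tr(M^n)=\sum_i\lambda_i^n$ and real $\lambda_i$ give $\rho_m^n\le\tr(M^n)\le K\rho_m^n$; the analogue for $\mathbf 1^TM^{n-1}v=\sum_i\lambda_i^{n-1}a_i^2$ (with $a_i=\sum_\phi V(\phi)^{1/2}e_i(\phi)$ for orthonormal eigenvectors $e_i$ of $\tilde M$) holds whenever $n-1$ is even, which is enough because Proposition~\ref{PROP:GRID} holds along any sequence. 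In (b), the hypothesis that $H$ is connected and nonbipartite is used to apply Perron--Frobenius to $\tilde M$: $\rho_m$ is a simple eigenvalue strictly dominating all others in absolute value, and the sandwich holds for every $n$.

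Once the asymptotics are in hand, Proposition~\ref{PROP:GRID} forces $\ln\rho_m/m\to L$ where $L$ is the planar-grid limit, and therefore
\[
\frac{\ln\hom(C_n\square P_m,H)}{nm}=\frac{\ln\rho_m}{m}+O\!\left(\frac{1}{n}\right)\longrightarrow L
\]
as $n,m\to\infty$ (with $n$ even in (a), unrestricted in (b)).

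The main obstacle is the Perron--Frobenius step in (b): I must verify that the transfer graph on column configurations with $V(\phi)>0$ is connected (irreducibility of $\tilde M$) and nonbipartite (aperiodicity). A transfer step constrains all $m$ coordinates at once, so these properties do not follow mechanically from the connectedness and nonbipartiteness of $H$. Irreducibility requires interpolating two column homomorphisms coordinatewise through walks in $H$ of synchronized length, and aperiodicity requires constructing an odd closed walk in the transfer graph by simultaneously rotating every coordinate around an odd cycle of $H$ (with starting vertex chosen so that each intermediate column stays a valid homomorphism). Both constructions rely on the nonbipartite hypothesis, which ensures that per-coordinate walks of every sufficiently large length exist between any two vertices in $H$ and so can be parity-matched.
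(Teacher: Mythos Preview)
Your transfer-matrix reduction is essentially the paper's ``auxiliary graph $G$'' recast in matrix language, and for part~(a) your argument is correct: with $n$ even the sandwich $\rho_m^n\le\tr(M^n)\le K\rho_m^n$ holds with $K=|V(H)|^m$, giving an error term that is genuinely $O(m)$ uniformly in $n$. The paper obtains the same sandwich by a Cauchy--Schwarz argument instead, but the conclusion is identical.

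For part~(b), however, there is a real gap. Perron--Frobenius (even granting irreducibility and aperiodicity) tells you only that $\rho_m$ is \emph{strictly} dominant, not by how much: the spectral gap $\rho_m-|\lambda_2|$ depends on $m$ and is in general exponentially small in $m$ (this is the typical behaviour for transfer matrices of two-dimensional models). For odd $n$ you have
\[
\tr(M^n)=\rho_m^n+\sum_{i\ge2}\lambda_i^n\ge\rho_m^n-(K-1)|\lambda_2|^n,
\]
and this lower bound is only useful once $n\gtrsim(\ln K)/\ln(\rho_m/|\lambda_2|)$, which can be of order $e^{cm}$. So your claimed error $\ln\hom(C_n\square P_m,H)=n\ln\rho_m+O(m)$, with the $O(m)$ uniform in $n$, is unjustified for odd $n$, and the double limit does not follow from the displayed formula. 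The paper handles exactly this difficulty by an additional layering trick: it fixes a single width $m_0$, uses the (non-uniform) spectral asymptotic only for $C_n\square P_{m_0}$ with $n$ large relative to $m_0$, and then tiles $C_n\square P_m$ by $\lfloor m/m_0\rfloor$ copies of $C_n\square P_{m_0}$ with prescribed boundary columns to transfer the lower bound to arbitrary $m$.

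Two smaller remarks. First, irreducibility (simplicity of $\rho_m$) is not actually needed: it suffices that no connected component of the column graph is bipartite, so that no eigenvalue equals $-\rho_m$. Second, the same ``rotation'' that gives aperiodicity also gives irreducibility: from a column $(v_1,\dots,v_m)$ one steps to $(v_2,\dots,v_m,v_{m+1})$ for any $v_{m+1}\sim v_m$, and since $H$ is connected and nonbipartite one can extend the walk $v_1,\dots,v_m$ in $H$ so that the last $m$ entries equal any prescribed target column. This is cleaner than the coordinatewise interpolation you sketch, which as you note has trouble keeping intermediate columns valid.
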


\begin{proof}
(a) We may assume that the edgeweights of $H$ are in $[0,1]$. Then
\[
\hom(C_n\square P_m)\le \hom(P_n\square P_m),
\]
and hence
\[
\limsup_{n,m\to\infty}\frac{\ln\hom(C_n\square P_m)}{nm}
\le\limsup_{n,m\to\infty} \frac{\ln\hom(P_n\square P_m)}{nm} \le s_0.
\]
On the other hand, consider the subsets of nodes
$A_1=\{(0,x):~x=1\dots m\}$ and $A_1=\{(n/2,x):~x=1\dots m\}$ in
$L'_{n,m}$. We think of $L'_{n,m}$ as two $(n/2+1)\times m$ grids
$B_1$ and $B_2$, glued together along $A_1$ and $A_2$. For any fixed
map $\sigma:~A_1\cup A_2\to V(H)$, let $M_\sigma$ denote the number
of $H$-colorings of $B_1$ extending $\sigma$. Then the number of
$H$-colorings of $L'_{n,m}$ extending $\sigma$ is $M_\sigma^2$, and
so
\[
\hom(C_n\square P_m,H)=\sum_\sigma M_\sigma^2,
\]
while
\[
\hom(P_{n/2+1}\square P_m,H)=\sum_\sigma M_\sigma.
\]
Thus by Cauchy-Schwartz,
\begin{align*}
\hom(C_n\square P_m,H)&=\sum_\sigma M_\sigma^2 \ge
\frac{1}{|V(H)|^{2m}}\left(\sum_\sigma M_\sigma\right)^2 \\
&=\frac{1}{|V(H)|^{2m}}\hom(P_{n/2+1}\square P_m,H)^2,
\end{align*}
and so
\begin{align*}
\hom(C_n\square P_m,H) &\ge
\frac{1}{nm}\left(2\ln\hom(P_{n/2+1}\square P_m,H)-2m \ln
|V(H)|\right) \\
&= (1+\frac{2}{m})\frac{\ln\hom(P_{n/2+1}\square P_m,H)}{(n/2+1)m} -\frac{2}{n} \\
&\ge (1+\frac{2}{m})s_0-\frac{2}{n}.
\end{align*}
This lower bound tends to $s_0$ as $n,m\to\infty$.

\medskip

(b) As before, it is trivial that
\[
\limsup_{n,m\to\infty} \frac{\ln\hom(C_n\square P_m)}{nm} \le s_0,
\]
so our task is to estimate $\hom(C_n\square P_m)$ from below.

Fix any $\eps>0$, and then fix an $n_0>0$ so that if $n,m\ge n_0$
then $\ln\hom(P_n\square P_m)/(nm)\ge s_0-\eps$.

For a given $m\ge n_0$, we construct an auxiliary weighted graph $G$
as follows. The nodes of $G$ are all maps $V(P_m)\to V(H)$ with
positive weight, where the weight of a map $\phi$ is $\prod_{i\in
V(P_m)} \alpha_{\phi(i)}(H)\prod_{ij\in E(P_m)}
\beta_{\phi(i)\phi(j)}(H)$. Two maps $\phi,\psi$ are connected by an
edge with weight $b_{\phi,\psi}=\prod_{i\in V(P_m)}
\beta_{\phi(i)\psi(i)}$ if this weight is positive.

\smallskip

{\bf Claim.} No connected component of $G$ is bipartite.

\smallskip

Consider any node $x$ of $G$; we want to show that there is a walk in
$G$ of odd length starting and ending at $x$.

The node $x$ is a map of $P_m$ into $H$, which can also be viewed as
a walk $W$ in $H$ with $m$ nodes. Since $H$ is connected and
nonbipartite, we can extend $W$ to a closed walk
$W'=(v_1,v_2,\dots,v_p)$ of odd length $p$ (so $W=(v_1,\dots,v_m)$).
Let $W_i=(v_i,v_{i+1},\dots,v_{i+m-1})$ (where the indices are taken
modulo $p$). The $W_i$ corresponds to a node in $H$ and
$(W_1,\dots,W_p)$ is a walk in $H$ of odd length containing $W$. This
proves the Claim.

\smallskip

Now observe that
\[
\hom(P_n\square P_m,H)= \hom(P_n,G)
\]
and
\[
\hom(C_n\square P_m,H)= \hom(C_n,G).
\]
Since $G$ is connected and nonbipartite, it is easy to show that
$\ln\hom(P_n\square P_m)/(nm)$ and $\ln\hom(C_n\square P_m)/(nm)$
tend to the same value as $n\to\infty$, and we know that this value
is at least $s_0-\eps$. So for a fixed $m\ge n_0$, if $n\ge
n_0(m,\eps)$, then $\ln\hom(P_n\square P_m)/(nm)\ge s_0-2\eps$.
Choose
\[
m_0=\max\Big\{n_0, \frac{2\ln q}{\eps}\Bigr\},
\]
and let
\[
N_0=\max\{\frac{s_0}{\eps}m_0, n_0(m_0,\eps)\}.
\]
Assume that $n,m\ge N_0$. Consider the cylindrical grid $C_n\square
P_{m_0}$. Since $n\ge n_0(m_0,\eps)$, this grid has at least
$\exp((s_0-2\eps)nm_0)$ $H$-colorings. Hence there is a way to fix
the map on the two ``boundary'' cycles so that after fixing it, we
still have at least
\[
\frac{\exp((s_0-2\eps)nm_0)}{q^{2n}} \ge\exp((s_0-3\eps)nm_0)
\]
extensions. Let $\alpha_1$ and $\alpha_2$ denote these maps of the
two boundary cycles.

Now consider the cylindrical grid $C_n\square P_{m_0}$. Map the
first, $m_0$-th, $(2m_0-1)$-st etc.\ $n$-cycles alternatingly
according to $\alpha_1$ and $\alpha_2$. Then the number of ways to
extend this to the layer between two such cycles is at least
$\exp((s-3\eps)nm_0)$, so the number of ways to extend the mapping to
the whole graph is at least
\[
\exp\Bigl((s_0-3\eps)nm_0\bigl\lfloor \frac{m}{m_0}\bigr\rfloor\Bigr)
\ge \exp\Bigl((s_0-3\eps)nm_0\bigl(\frac{m}{m_0}-1\bigr)\Bigr) \ge
\exp((s_0-4\eps)nm).
\]
Hence
\[
\ln\hom(C_n\square P_m)/(nm) \ge s_0-4\eps.
\]
\end{proof}

For toroidal grids, the situation is similar (and so are the proofs),
and we only state the result:

\begin{prop}\label{PROP:TOROIDAL}
{\rm (a)} If $n$ and $m$ are restricted to even numbers, then for
every weighted graph $H$, the sequence $\ln\hom(C_n\square C_m)/(nm)$
is convergent as $n,m\to\infty$.

\smallskip

{\rm (b)} If $H$ is connected and nonbipartite, then the sequence
$\ln\hom(C_n\square C_m)/(nm)$ is convergent as
$n,m\to\infty$.\proofend
\end{prop}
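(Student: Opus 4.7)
The plan is to mirror the two arguments of Proposition \ref{PROP:CYLGRID}, applying each ingredient in both cycle directions since the torus has no boundary at all to anchor the argument. As always we may assume the edgeweights of $H$ lie in $[0,1]$, so that edge deletion only increases $\hom(\cdot,H)$; then $\hom(C_n\square C_m,H)\le\hom(C_n\square P_m,H)$, and Proposition \ref{PROP:CYLGRID} immediately gives the upper bound $\limsup_{n,m\to\infty}\frac{\ln\hom(C_n\square C_m,H)}{nm}\le s_0$ in both parts. Only the matching lower bounds need new work.

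For part (a), with both $n$ and $m$ even, I would apply the Cauchy--Schwarz splitting of Proposition \ref{PROP:CYLGRID}(a) twice, once in each cycle direction. First, cutting the torus along the two $C_m$-cycles at $n$-coordinates $0$ and $n/2$ and fixing the restriction $\sigma$ of a homomorphism to their union gives
\[
\hom(C_n\square C_m,H)=\sum_\sigma M_\sigma^2\ge |V(H)|^{-2m}\hom(P_{n/2+1}\square C_m,H)^2.
\]
Since $m$ is also even, the analogous splitting of $P_{n/2+1}\square C_m$ along the two $P_{n/2+1}$-paths at $m$-coordinates $0$ and $m/2$ gives
\[
\hom(P_{n/2+1}\square C_m,H)\ge |V(H)|^{-2(n/2+1)}\hom(P_{n/2+1}\square P_{m/2+1},H)^2.
\]
Substituting, taking logs and dividing by $nm$, the leading term $\frac{4\ln\hom(P_{n/2+1}\square P_{m/2+1},H)}{nm}$ tends to $4\cdot\tfrac14 s_0=s_0$ by Proposition \ref{PROP:GRID}, while the two $\ln|V(H)|$ penalties contribute $O(1/n+1/m)$.

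For part (b), where $H$ is connected and nonbipartite but no parity on $n,m$ is assumed, Cauchy--Schwarz fails on odd-length cuts, so I would reuse the alternating-boundary construction from the proof of Proposition \ref{PROP:CYLGRID}(b). For any $\eps>0$ that construction supplies an $m_0$ and maps $\alpha_1,\alpha_2:V(C_n)\to V(H)$ such that, for $n$ large, the number of homomorphisms of $C_n\square P_{m_0}$ extending $(\alpha_1,\alpha_2)$ on its two boundary cycles is at least $\exp((s_0-3\eps)nm_0)$. Writing $m=2j(m_0-1)+r$ with $0\le r<2(m_0-1)$, I would mark $2j$ cycles of the torus at positions spaced $m_0-1$ apart (with one spacing widened by $r$), decorate them alternately by $\alpha_1,\alpha_2$ (consistent on closing because $2j$ is even), and restrict to homomorphisms fitting this pattern: each of the $2j-1$ standard tubes contributes a factor of at least $\exp((s_0-3\eps)nm_0)$, the one fat wrap-around tube of width $m_0+r$ contributes $\ge 1$ extension, and multiplying gives $\hom(C_n\square C_m,H)\ge\exp((s_0-4\eps)nm)$ for $n,m$ large enough (and $m_0$ chosen large enough relative to $|s_0|/\eps$ to absorb the $m_0/(m_0-1)$ rescaling). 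Hence $\liminf\ge s_0-4\eps$, and $\eps\to 0$ finishes the proof.

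The hard part is the existence of at least one extension of $(\alpha_2,\alpha_1)$ in the fat wrap-around tube, since the cylindrical construction only produces its $\exp((s_0-3\eps)nm_0)$ bound at tube-width exactly $m_0$. This is where the nonbipartiteness of $H$ enters essentially: the auxiliary graph $G$ from the proof of Proposition \ref{PROP:CYLGRID}(b), whose nodes are positive-weight maps $V(C_n)\to V(H)$, has no bipartite component, and $\alpha_1,\alpha_2$ lie in a common component of $G$ (they arose from one extension). Hence $\alpha_1$ and $\alpha_2$ are joined in $G$ by walks of every sufficiently large length, so for $m_0$ large a walk of every length in $\{m_0-1,\dots,3m_0-3\}$ exists, giving the required extension for any residue $r$.
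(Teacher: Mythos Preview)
The paper does not actually prove Proposition~\ref{PROP:TOROIDAL}; it only remarks that ``the situation is similar (and so are the proofs)'' to Proposition~\ref{PROP:CYLGRID}. Your argument for part~(a) --- two successive Cauchy--Schwarz cuts reducing the torus to a grid --- is correct and is surely what the authors had in mind.

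For part~(b), however, your treatment of the wrap-around tube has a real gap. The auxiliary graph you invoke has nodes the positive-weight maps $V(C_n)\to V(H)$; this is \emph{not} the graph in the proof of Proposition~\ref{PROP:CYLGRID}(b), whose nodes are maps $V(P_m)\to V(H)$. The paper's proof of the Claim there uses that a node is an open walk in $H$, which one can extend to an odd closed walk $W'$ and then cyclically shift; each shift $W_i$ is again an open walk, hence a node. For your $C_n$-based graph a node is a \emph{closed} walk, and a cyclic shift of an extension $W'$ is generally not closed, so the argument does not transfer. (For odd $n$ there is a cheap substitute: the rotation $\phi\mapsto\phi(\,\cdot+1)$ gives a closed walk of odd length $n$ at every vertex. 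For even $n$ you can bypass the issue entirely by a single Cauchy--Schwarz cut in the $n$-direction, reducing to $P_{n/2+1}\square C_m$ and Proposition~\ref{PROP:CYLGRID}(b).)

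Even granting non-bipartiteness, your sentence ``for $m_0$ large a walk of every length in $\{m_0-1,\dots,3m_0-3\}$ exists'' is not justified. The threshold beyond which walks of every length exist between $\alpha_1$ and $\alpha_2$ depends on the odd girth and diameter of their component in the $C_n$-auxiliary graph; that graph changes with $n$, and the only odd closed walk you actually exhibit (rotation) has length $n$. Since $m_0$ is fixed by $\eps$ before $n\to\infty$, you cannot make $m_0$ large relative to $n$. Concretely, for both $n,m$ odd your decomposition forces the residue $r$ to be odd, so the required fat-tube walk length $(m_0-1)+r$ has the opposite parity to the known walk of length $m_0-1$; the shortest walk of that parity you can build is $(m_0-1)+n$, which exceeds $3m_0-3$ once $n>2(m_0-1)$. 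One fix is to allow the fat tube width up to order $m_0+n$, which works when $m\gg n$ (and by symmetry when $n\gg m$), but the diagonal regime $n\asymp m$ then still needs a separate argument.
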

\end{document}